\documentclass[a4paper,10pt,fleqn]{amsart}
\pdfoutput=1

 \usepackage[paperwidth=17cm,%
     paperheight=24cm,%
     footskip=0pt,%
     headsep=0.5cm,%
     headheight=0.5cm,%
     width=12.2cm,%
     left=2cm,%
     top=2cm,%
     bottom=2.5cm]{geometry}

\usepackage[utf8]{inputenc}
\usepackage[T1]{fontenc}

\usepackage{todonotes}
\usepackage{microtype}

\usepackage[english]{babel}
\usepackage{csquotes} %

\usepackage{setspace}
\setstretch{1.07}

\usepackage{amsmath,amsthm,amsfonts,amssymb}
\usepackage{mathtools}

\usepackage[shortlabels]{enumitem}

\usepackage{xcolor}
\usepackage{xspace}
\usepackage[hypertexnames=false]{hyperref}

\definecolor{darkblue}{rgb}{0,0,0.5}
\definecolor{cerule}{RGB}{53,122,183}
\definecolor{cardinal}{RGB}{184,32,16}
\hypersetup{
    colorlinks,
    linkcolor={black},
    citecolor={cerule},
    urlcolor={cardinal}
}

\usepackage{float}
\usepackage[]{algpseudocode}
\algrenewcommand\textproc{}

\usepackage{tikz}
\usetikzlibrary{decorations,arrows,automata,trees,fit,shapes,intersections,positioning}
\definecolor{newLightBrown}{RGB}{230,51,18}

\usepackage[ruled]{caption}      %
\usepackage{subcaption}

\captionsetup{format=hang,justification=raggedright,font=small,width=.95\textwidth,labelfont=it,labelsep=period}
\floatstyle{ruled}
\newfloat{algo}{tp}{lop}
\floatname{algo}{Algorithm}
\setcounter{topnumber}{1}     %
\setcounter{bottomnumber}{1}  %
\setcounter{tocdepth}{1}

\allowdisplaybreaks[2]

\usepackage[
  citestyle=authoryear-icomp,
  bibstyle=authoryear,
  backend=biber,
  isbn=false,
  maxcitenames=2,
  maxbibnames=6,
  giveninits=true,
  sortcites=true,
  url=false
]{biblatex}
\bibliography{periodgap.bib}

\AtEveryBibitem{\clearfield{month}}
\AtEveryBibitem{\clearfield{day}}

\DeclareNameAlias{sortname}{first-last}

\def\eqdef{\stackrel{.}{=}}
\newcommand{\st}{\mathrel{}\middle|\mathrel{}}

\usepackage{mathabx}
\def\VERT{\vvvert}
\def\tn#1{\left\VERT #1 \right\VERT}

\def\epsilon{\varepsilon}
\def\ud{\mathrm{d}}

\def\mop#1{\operatorname{#1}}

\def\abs#1{\left| #1 \right|}

\def\MP{\mathbb{P}^{\mathbf{n}}}
\def\bfn{\mathbf{n}}

\def\Pic{\operatorname{Pic}}
\def\Hii{H^{1,1}}
\def\NL{\mathcal{N\!L}}
\def\wNL{\widetilde{\mathcal{N\!L}}}
\def\pNL{\mathrm{NL}}

\def\Qbar{\overline{\mathbb{Q}}}

\def\C{\mathbb{C}}
\def\Z{\mathbb{Z}}

\def\ppp{\mathbb{P}^3}

\def\P{\mathbb{P}}

\def\ca{\mathcal{A}}
\def\cb{\mathcal{B}}

\def\ch{\mathcal{H}}

\def\cp{\mathcal{P}}

\def\bx{\mathbf{x}}

\def\Xf{X_{\! f}}
\def\Cf{C_{\! f}}
\def\epsf{\epsilon_{\! f}}

\def\volproj{\mop{Vol}}

\def\bsmale{\beta_{\mathrm{Smale}}}
\def\gsmale{\gamma_{\mathrm{Smale}}}

\def\mult{\mop{mult}}

\newcommand{\norm}[1]{\left\| #1 \right\|}

\title{Separation of periods of quartic surfaces}
\newtheorem{theorem}{Theorem}%
\newtheorem{proposition}[theorem]{Proposition}
\newtheorem{lemma}[theorem]{Lemma}
\newtheorem{corollary}[theorem]{Corollary}
\newtheorem{conjecture}[theorem]{Conjecture}

\theoremstyle{definition}

\theoremstyle{remark}

\makeatletter
\@namedef{subjclassname@2020}{%
  \textup{2020} Mathematics Subject Classification}
\makeatother

\usepackage{color}

\begin{document}

\author[P.~Lairez]{Pierre Lairez}
\address{Pierre Lairez, Inria, France}
\email{pierre.lairez@inria.fr}
\urladdr{pierre.lairez.fr}

\author[E.~C.~Sert\"oz]{Emre Can Sert\"oz}
\address{Emre Can Sert\"oz, Leibniz University Hannover, Hannover, Germany}
\email{emre@sertoz.com}
\urladdr{emresertoz.com}

\subjclass[2020]{14Q10, 14J28, 32G20, 11Y16, 14Q20, 11J99}
\keywords{K3 surfaces, periods, Diophantine approximation, Hodge loci, effective mathematics}

\date{\today}

\begin{abstract}
We give a computable lower bound on the distance between two distinct periods of a given quartic surface defined over the algebraic numbers.
The main ingredient is the determination of height bounds on components of the Noether--Lefschetz loci. 
This makes it possible to study the Diophantine properties of periods of quartic surfaces and to certify a part of the numerical computation of their Picard groups.
\end{abstract}  

\maketitle

\setcounter{tocdepth}{2}

\section{Introduction}
\label{sec:introduction}

Periods are a countable set of complex numbers 
containing all the algebraic numbers as well as many of the transcendental constants of nature. 
In light of the ubiquity of periods in mathematics and the sciences, \textcite{KontsevichZagier_2001} 
ask for the development of an algorithm to check for the equality of two given periods. 
We solve this problem for periods coming from quartic surfaces by 
giving a computable separation bound, that is, 
a lower bound on the minimum distance between distinct periods.

Let $f \in \mathbb{\C}[w,x,y,z]_4$ be a homogeneous quartic polynomial  
defining a smooth quartic $\Xf$ in~$\mathbb{P}^3(\mathbb{C})$.
The periods of $\Xf$ are the integrals of a nowhere vanishing holomorphic $2$-form on $\Xf$ over
integral $2$-cycles in $\Xf$. The periods can also be given in the form of integrals of a rational function
\begin{equation}\label{eq:intro}
  \frac{1}{2\pi i} \oint\nolimits_\gamma \frac{\ud x\,\ud y\,\ud z}{f(1, x, y, z)},
\end{equation}
where~$\gamma$ is a $3$-cycle in~$\mathbb{C}^3 \setminus \Xf$. The
integral~\eqref{eq:intro} depends only on the homology class of~$\gamma$. These periods form a 
group under addition. 
The geometry of quartic surfaces dictates that there are only $21$~independent $3$-cycles 
in~$\mathbb{C}^3 \setminus \Xf$. These give $21$~periods $\alpha_1,\dotsc,\alpha_{21} \in \mathbb{C}$ such that
the integral over any other $3$-cycle is an integer linear combination of these periods. 

It is possible to compute the periods to high precision \parencite{Sertoz_2019}, 
typically to thousands of decimal digits,
and to deduce from them interesting algebraic invariants 
such as the Picard group of $\Xf$~\parencite{LairezSertoz_2019}. 
This point of view has been fruitful for computing
algebraic invariants for algebraic curves from their periods
\parencite{VanWamelen_1999,CostaMascotSijslingVoight_2019,BruinSijslingZotine_2019,BookerSijslingSutherlandVoightYasaki_2016}.

For quartic surfaces, the computation of the Picard group reduces to
computing the lattice in $\Z^{21}$ of integer relations 
$x_1 \alpha_1 + \dotsb + x_{21}\alpha_{21} = 0$, $x_i \in \Z$. A 
basis for this lattice can be guessed from approximate $\alpha_i$'s using lattice reduction algorithms.  
But is it possible to \emph{prove} that all guessed relations are true relations? 
Previous work related to this question \parencite{Simpson_2008} require explicit construction 
of algebraic curves on~$\Xf$, which becomes challenging very quickly.  
Instead, we give a method of proving relations by checking them at a predetermined finite precision.
At the moment, this is equally challenging, but we conjecture that the numerical approach
can be made asymptotically faster, see \S \ref{sec:comp-compl} for details.

The Lefschetz theorem on $(1,1)$-classes (\S\ref{sec:lefschetz}) associates a divisor on $\Xf$ to any integer relation between the periods of~$\Xf$. In turn, the presence of a divisor %
imposes algebraic conditions on the coefficients of $f$. 
Such algebraic conditions define the \emph{Noether--Lefschetz loci} on the space of quartic polynomials (\S\ref{sec:NL}). 
In addition to the degree computations of \textcite{MaulikPandharipande_2013}, we give height bounds on the polynomial equations defining the Noether--Lefschetz loci (Theorem~\ref{thm:nl-bounds}). 
These lead to our main result (Theorem~\ref{thm:separation-bound}): 
Assume $f$ has algebraic coefficients, then for $x_i \in \Z$,
\begin{equation}\label{eq:main_result}
  \tikz[baseline,anchor=base]{
    \node [draw=cardinal,inner sep=.5em, thick, rounded corners=.2em] {
      $x_1 \alpha_1 + \dotsb + x_{21}\alpha_{21} = 0 \text{ or } \abs{x_1 \alpha_1 + \dotsb + x_{21}\alpha_{21}}
      > 2^{-c^{\max_i \abs{x_i}^9}} \mathrlap{\phantom{f_{f_{f}}}}
      $
    }}
\end{equation}
for some constant~$c > 0$ depending only on~$f$ and the choice of the 21 independent 3-cycles (see Theorem~\ref{thm:separation-bound} for a coordinate-free formulation).
The constant~$c$ is computable in rather simple terms and without prior knowledge of the Picard group of~$\Xf$.
We use the term ``computable'' in the sense of ``computable with a Turing machine'', not ``primitive recursive'',
as our suggested algorithm to compute~$c$ depends, through Lemma~\ref{lem:technical1}, on the numerical computation of a nonzero constant (depending on~$f$),
whose magnitude is not known \emph{a priori}, only the fact that it is nonzero.

The expression~\eqref{eq:main_result} is essentially a lower bound for the linear independence measure~\parencite[Ch.11]{Shidlovskii1989} for the periods of $\Xf$. Our construction of this bound bears a loose resemblence to the ideas involved in the statement of the analytic subgroup theorem \parencite{Wustholz1989}, and in particular, to the Wüstholz--Masser period theorem~\parencite{Masser1993}. We briefly comment on this analogy in~\S\ref{sec:analogy}.

As a consequence of the separation bound~\eqref{eq:main_result}, we apply a construction in the manner of \textcite{Liouville_1851}
and prove, for instance, that the number 
\begin{equation}
\sum_{n\geq 0} (2 \upuparrows 3n)^{-1}
\end{equation}
is not a quotient of two periods of a single quartic surface defined over~$\Qbar$,
where~$2 \upuparrows 3n$ denotes an exponentiation tower with~$3n$ twos (Theorem~\ref{thm:liouville}, with~$\theta_{i+1} = 2^{2^{2^{\theta_i}}}$).

The methods we employed to attain the period separation bound~\eqref{eq:main_result} can in principle be generalized to separate the periods of some other algebraic varieties, e.g., of cubic fourfolds. We discuss these and other generalizations in~\S\ref{sec:conclusion}.

\subsection*{Acknowledgements}
We thank Bjorn Poonen for suggesting the use of heights of Noether--Lefschetz loci 
and Gavril Farkas for suggesting the paper by Maulik and Pandharipande.  
We also thank Alin Bostan and Matthias Schütt for numerous helpful comments.
We thank the referee for a careful reading of the paper.

ECS was supported by Max Planck Institute for Mathematics in the Sciences, Leibniz University Hannover, and Max Planck Institute for Mathematics. 
PL was supported by the project \emph{De Rerum Natura} ANR-19-CE40-0018 of the French National Research Agency (ANR).

\section{Periods and deformations}

\subsection{Construction of the period map}

For any non-zero homogeneous polynomial $f$ in $\mathbb{C}[w,x,y,z]$, let~$\Xf$
denote the surface in~$\mathbb{P}^3$ defined as the zero locus of $f$. 
Let~$R \eqdef \mathbb{C}[w,x,y,z]$ and let~$R_4 \subset R$ be the subspace of degree~$4$ homogeneous polynomials.
Let~$U_4 \subset R_4$ denote the dense open subset of all homogeneous polynomials~$f$ of degree~$4$ 
such that~$\Xf$ is smooth. %
For our purposes, it will be useful to consider not only the periods of a single quartic
surface~$\Xf$ but also the \emph{period map} to study the dependence of periods on $f$.

The topology of~$\Xf$ does not depend on~$f$ as long as~$\Xf$ is smooth: given
two polynomials~$f$ and~$g \in U_4$, we can connect them by a continuous path
in~$U_4$ and the surface~$\Xf$ deforms continuously along this path, giving a
homeomorphism~$\Xf\simeq X_g$, which is uniquely defined up to isotopy. In
particular, if we fix a base point~$b \in U_4$, then for
every~$f \in \widetilde{U}_4$, where~$\widetilde{U}_4$ is a universal covering of~$U_4$,
we have a uniquely determined isomorphism of cohomology groups 
$H^2(X_b, \mathbb{Z}) \simeq H^2(\Xf, \mathbb{Z})$. Let~$H_\mathbb{Z}$ denote
the second cohomology group of $X_b$, which is isomorphic to~$\mathbb{Z}^{22}$ \parencite[e.g.][\S 1.3.3]{Huybrechts_2016}. 

The hyperplane class and its multiples are redundant for the problem we are interested as their periods are $0$. In practice, therefore, we work with a rank $21$ quotient lattice. The map~\eqref{eq:tube} below identifies this quotient with the cohomology of the complement of the quartic.

An element of $\widetilde{U}_4$ determines a polynomial
$f \in U_4$ together with an identification of $H^2(\Xf,\Z)$ with $H_\Z$.
We often work locally around a given polynomial~$f$ and, in that case, we do not
actively distinguish between~$U_4$ and its universal covering.

The group~$H_\mathbb{Z}$ is endowed with an even unimodular pairing 
\begin{equation}
(x,y) \in H_\mathbb{Z} \times H_\mathbb{Z} \to x\cdot y \in \mathbb{Z},
\end{equation}
given by the intersection form on cohomology. Through this pairing, the second homology and cohomology groups are canonically identified with one another. 
For K3 surfaces, such as smooth quartic surfaces in $\ppp$, the structure of the lattice~$H_\mathbb{Z}$ with its intersection form is explicitly known \parencite[Proposition~\textbf{1}.3.5]{Huybrechts_2016}.
The fundamental class of a generic hyperplane section of~$\Xf$ gives an element of~$H_\mathbb{Z}$ denoted by~$h$.

Furthermore, the complex cohomology group~$H^2(\Xf, \mathbb{C})$, which is just~$H_\mathbb{C} \eqdef H_\mathbb{Z} \otimes \mathbb{C}$, is isomorphic to the corresponding de~Rham cohomology~$H^2_\text{dR}(\Xf, \mathbb{C})$ group as follows.
Elements of~$H^2_\text{dR}(\Xf, \mathbb{C})$ are represented by differential $2$-forms. To a form~$\Omega$
one associates the element~$\Theta(\Omega)$ of~$H^2(\Xf, \mathbb{C})$ given by the map
\begin{equation}\label{eq:20}
  \Theta(\Omega) \colon [ \gamma ] \in H_ 2(\Xf, \mathbb{C}) \mapsto \int_\gamma \Omega \in \mathbb{C}.
\end{equation}

The group $H^2_\text{dR}(\Xf, \mathbb{C})$ has a distinguished element~$\Omega_f$, a nowhere vanishing holomorphic $2$-form, described below.
Every other holomorphic $2$-form on~$\Xf$ is a scalar multiple of $\Omega_f$ \parencite[Example~\textbf{1}.1.3]{Huybrechts_2016}.
Mapping $\Omega_f$ to~$H_\mathbb{C}$ gives rise to the \emph{period map}
\begin{equation}\label{eq:32}
  \mathcal{P} \colon f \in \widetilde{U}_4 \mapsto \omega_f \eqdef \Theta(\Omega_f) \in H_\mathbb{C}.
\end{equation}
The coordinates of the \emph{period vector}~$\omega_f$, in some fixed basis of~$H_\mathbb{Z}$,
generates the group of periods of~$\Xf$.

There is a standard Thom--Gysin type map in homology 
\begin{equation}\label{eq:tube}
  T \colon H_2(\Xf, \mathbb{Z}) \to H_3(\mathbb{P}^3 \setminus \Xf, \mathbb{Z}),
\end{equation}
see \parencite[p.159]{Voisin_2003} for a modern description.
Roughly speaking, $T$ takes the class of a $2$-cycle in $\Xf$ and returns the class of a narrow $S^1$-bundle around the cycle lying entirely in $\mathbb{P}^3 \setminus \Xf$. See \parencite[\S 3]{Griffiths_1969} for this classical interpretation. The map~$T$ is a surjective morphism and its kernel is generated by the class of a hyperplane section of~$\Xf$.

We choose $\Omega_f$ so that the following identity holds
\begin{equation}\label{eq:31}
  \int_\gamma \Omega_f = \frac{1}{2\pi i} \int_{T(\gamma)} \frac{\ud x\,\ud y\, \ud z}{f(1, x, y, z)}.
\end{equation}
Therefore, in view of~\eqref{eq:20}, the coefficients of~$\omega_f$ in a basis of~$H_\mathbb{Z}$
coincides with periods as defined in~\eqref{eq:intro}.

The image~$\mathcal{D}$ of the period map~$\mathcal{P}$ is called the \emph{period domain}.
It admits a simple description:
\begin{equation}\label{eq:42}
  \mathcal{D} \eqdef \mathcal{P}(\widetilde{U}_4) = \left\{ w \in H_\mathbb{C} \setminus \left\{ 0 \right\}\st w\cdot h =0, w\cdot w = 0, w\cdot \overline w > 0 \right\},
\end{equation}
where~``$\cdot$'' denotes the intersection form on~$H_\mathbb{Z}$, extended to~$H_\C$ by $\C$-linearity, and~$h$ the fundamental class of a hyperplane section,
as introduced above \parencite[Chapter~6]{Huybrechts_2016}.
Moreover, by the local Torelli theorem for K3 surfaces \parencite[Proposition~\textbf{6}.2.8]{Huybrechts_2016}, the map~$\mathcal{P}$ is a submersion: its derivative at any point of~$\widetilde{U}_4$ is surjective.

\subsection{The Lefschetz (1,1)-theorem}\label{sec:lefschetz}

Lefschetz proved that the linear integer relations between the periods of a quartic surface~$\Xf$ are in correspondence with
homology classes coming from algebraic curves in~$\Xf$. We now explain this statement in more detail.
Let~$C \subset \Xf$ be an algebraic curve. Its fundamental class is the element~$[C]$
of~$H_\mathbb{Z}$ obtained as the Poincaré dual of the homology class of~$C$. Here we identify $H_\mathbb{Z}$ with~$H^2(X_f, \Z)$ by fixing a preimage of~$f$ in $\widetilde{U}_4$.
The Picard group~$\Pic(\Xf)$ of~$\Xf$ is
the sublattice of~$H_\mathbb{Z}$ spanned by the fundamental classes of algebraic
curves.

It follows from the definition that for any class~$[\Omega] \in H^2_\text{dR}(\Xf)$ of a differential $2$-form on~$\Xf$,
\begin{equation}
  [C] \cdot \Theta(\Omega) = \int_C \Omega.
\end{equation}
Moreover, if~$\Omega$ is a holomorphic $2$-form, then~$\int_C \Omega = 0$ because the restriction of~$\Omega$ to the complex $1$-dimensional
subvariety~$C$ vanishes.
In particular
$[C] \cdot \omega_f = 0$.
It turns out that this condition characterizes the elements of $\Pic(\Xf)$.

More precisely, let~$H^{1,1}(\Xf) \subset H_\mathbb{C}$ denote the space orthogonal to $\omega_f$ and $\overline{\omega}_f$, the conjugate of $\omega_f$, with respect to the intersection form.
This space is a direct summand in the Hodge decomposition of~$H^2(\Xf, \mathbb{C})$.

The Lefschetz (1,1)-theorem \parencite[163]{GriffithsHarris_1978} asserts that the lattice of integer relations coincide with the Picard group:
\begin{equation}\label{eq:7}
  \Pic(\Xf) = H_\mathbb{Z} \cap \Hii(\Xf).
\end{equation}
Noting that for any~$\gamma \in H_\mathbb{Z}$, $\overline \gamma = \gamma$, where~$\overline \gamma$ denotes the complex conjugate,
we have $\overline{\omega_f \cdot \gamma} = \overline{\omega}_f \cdot \gamma$,
so that~\eqref{eq:7} becomes
\begin{equation}\label{eq:8}
  \Pic(\Xf) = \left\{ \gamma \in H_\mathbb{Z} \st \gamma \cdot \omega_f = 0 \right\}.
\end{equation}

\subsection{A deformation argument}
\label{sec:deformation-argument}

Let~$\gamma_1,\dotsc,\gamma_{22}$ be a basis of~$H_\mathbb{Z}$.
The space $H_\mathbb{R}$ (resp.~$H_\mathbb{C}$) is endowed with the coefficient wise Euclidean (resp. Hermitian) norm
\begin{equation}\label{eq:16}
  \bigg\| \sum_{i=1}^{22} x_i \gamma_i \bigg\|^2 \eqdef \sum_{i=1}^{22} \abs{x_i}^2.
\end{equation}

For~$\gamma \in H_\mathbb{Z}$, if~$\abs{\gamma \cdot \omega_f}$ is small enough,
then~$\gamma$ is close to being an integer relation between the periods of~$\Xf$.
We want to argue that, in this case, $\gamma$ is a genuine integer relation between the periods of~$X_g$ for some polynomial~$g \in U_4$ close to~$f$.

Recall $f,g\in \widetilde{U}_4$ means $f$ and $g$ are smooth quartics with second cohomology identified with $H_\Z$. The space $\widetilde{U}_4$ inherits a metric from $U_4$ so that $\widetilde{U}_4 \to U_4$ is locally isometric. The metric on $U_4 \subset R_4 \simeq \C^{35}$ is induced by an inner product. The choice of an inner product will change the distances but this is absorbed into the constants in the statements below.

Let~$f \in \widetilde{U}_4$ be fixed. For any $g \in R_4$ and $t \in \C$ small enough, the polynomials   
$f+tg \in R_4$ lift canonically to $\widetilde{U}_4$.  
For any~$\gamma \in H_\mathbb{C}$ we consider the map
\begin{equation}
\phi_{\gamma,g}(t) \eqdef \gamma \cdot \mathcal{P}(f+tg)\label{eq:14}
\end{equation}
which is well-defined and analytic in a neighbourhood of~$0$ in~$\mathbb{C}$.

\begin{lemma}\label{lem:technical1}
  There is a constant~$C > 0$, depending only on~$f$,
  such that for any~$\gamma \in H_\mathbb{C}$ satisfying~$\gamma\cdot h = 0$ 
  and~$\abs{\gamma \cdot \overline{\omega}_f} \|\omega_f\| \le \frac12 \|\gamma\| ( \omega_f \cdot \overline{\omega}_f )$,
  there is a monomial~$m\in R_4$ for which~$\abs{\phi_{\gamma, m}'(0)} \geq C \|\gamma\|$.
\end{lemma}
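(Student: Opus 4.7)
The plan is to combine local Torelli with a compactness argument. Differentiating~\eqref{eq:14} at $t=0$ yields $\phi_{\gamma,g}'(0) = \gamma \cdot L(g)$, where $L \eqdef D\mathcal{P}(f)\colon R_4 \to H_\mathbb{C}$. By the local Torelli theorem recalled in Section~2.1, $L$ is surjective onto the tangent space of $\mathcal{D}$ at $\omega_f$, which from the description~\eqref{eq:42} equals
\[
  T \eqdef \{v \in H_\mathbb{C} : v\cdot h = 0 \text{ and } v\cdot\omega_f = 0\}.
\]
Since the $35$ monomials of degree~$4$ span $R_4$, vanishing of $\phi_{\gamma,m}'(0)$ for every monomial~$m$ is, by linearity, equivalent to $\gamma$ lying in the intersection-form orthogonal $T^{\perp}$.

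Next, the non-degeneracy of the intersection form gives $T^{\perp} = \langle h, \omega_f\rangle$, and a direct computation using $h\cdot h = 4 \neq 0$ and $h\cdot\omega_f = 0$ shows $h^{\perp}\cap T^{\perp} = \langle \omega_f\rangle$. Hence if $\gamma \in h^{\perp}$ satisfies $\phi_{\gamma,m}'(0) = 0$ for every monomial $m$, then $\gamma$ must be a scalar multiple of $\omega_f$. The numerical hypothesis on $\gamma$ rules this out: for $\gamma = \lambda\omega_f$ with $\lambda \neq 0$, the hypothesis reduces to $|\lambda|\,\|\omega_f\|(\omega_f\cdot\overline{\omega}_f) \le \tfrac12|\lambda|\,\|\omega_f\|(\omega_f\cdot\overline{\omega}_f)$, which fails; this is the only role of the constant $\tfrac12$ in the statement.

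The conclusion then follows by compactness. Both the hypothesis and the desired inequality are homogeneous of degree~$1$ in $\gamma$, so we may restrict to the set
\[
  S = \bigl\{\gamma \in H_\mathbb{C} : \|\gamma\| = 1,\ \gamma\cdot h = 0,\ |\gamma\cdot \overline{\omega}_f|\,\|\omega_f\| \le \tfrac12\,(\omega_f\cdot\overline{\omega}_f)\bigr\},
\]
which is closed in the unit sphere of $H_\mathbb{C}$, hence compact. The map $\gamma \mapsto \max_m |\phi_{\gamma,m}'(0)|$ (maximum over the finitely many degree-$4$ monomials) is continuous and, by the preceding paragraph, strictly positive on $S$, so it attains a positive minimum $C > 0$ depending only on $f$. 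The main non-routine ingredient is the identification of $h^{\perp}\cap T^{\perp}$; once that is in place, existence of $C$ is automatic by continuity, though $C$ itself is accessible only numerically---which is consistent with the remark after~\eqref{eq:main_result} that the separation bound depends on precisely such a numerically computable nonzero constant.
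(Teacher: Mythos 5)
Your argument is correct. You and the paper use the same essential geometric input --- local Torelli identifies the image of $\ud_f\mathcal{P}$ with $T_{\omega_f}\mathcal{D} = \{w : w\cdot h = w\cdot\omega_f = 0\}$, whose orthogonal for the (nondegenerate) intersection form is $\C h + \C\omega_f$, and the condition $\gamma\cdot h=0$ together with the numerical hypothesis excludes $\C\omega_f$ --- but you conclude differently. You restrict to the compact set $S$ cut out by the hypotheses on the unit sphere and invoke continuity to get a positive minimum; the paper instead introduces the Hermitian form $Q(\gamma)=\sum_m|\gamma\cdot\ud_f\mathcal{P}(m)|^2$, takes $C$ to be its smallest eigenvalue on the explicit complement $E=(\C h+\C\overline{\omega}_f)^\perp$, and then transports the hypothesis set into $E$ via the projection $\eta=\gamma-\frac{\gamma\cdot\overline{\omega}_f}{\omega_f\cdot\overline{\omega}_f}\,\omega_f$, where the factor $\frac12$ is used quantitatively to get $\|\eta\|\geq\frac12\|\gamma\|$ (in your proof the $\frac12$ serves only to exclude multiples of $\omega_f$ from $S$). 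The trade-off: your route is shorter and avoids the projection estimate, but it produces $C$ as the minimum of a function over a semialgebraic compact set, which is existence-only as written; the paper's route exhibits $C$ as (half of) the smallest eigenvalue of an explicit Hermitian matrix restricted to an explicit subspace, which is what supports the later claim that this constant ``is not hard to get with elementary linear algebra'' and feeds into the computability of the constant $c$ in Theorem~\ref{thm:separation-bound}. If you want your version to serve the same purpose, you would still need to supplement it with a procedure for certifying a positive lower bound on the minimum over $S$, which in effect brings you back to the eigenvalue computation.
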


\begin{proof}
  Observe that for any monomial~$m\in R_4$, $\phi_{\gamma, m}'(0) = \gamma \cdot \ud_f \mathcal{P}(m)$, where~$\ud_f \mathcal{P}$ is the derivative at~$f$ of~$\mathcal{P}$.
  Let~$Q$ be the positive semidefinite Hermitian form defined on $H_\C$ by
  \begin{equation}
    \label{eq:2}
    Q(\gamma) \eqdef  \sum_m \abs{\gamma \cdot \ud_f \mathcal{P}(m)}^2,
  \end{equation}
  where the sum is taken over the monomials in~$m$.
  Since~$\max_m \abs{\phi_{\gamma, m}'(0)}^2 \geq \tfrac{1}{\dim R_4} Q(\gamma)$,
  it is enough to prove that~$Q(\gamma) \geq C \|\gamma\|$ for some constant~$C > 0$,
  when~$\gamma \cdot h = 0$ and~$\abs{\gamma \cdot \overline{\omega}_f} \|\omega_f\| \le \frac12 \|\gamma\| ( \omega_f \cdot \overline{\omega}_f )$.

  The form~$Q$ vanishes exactly on the orthogonal complement (for the intersection product)
  of  the tangent space $T_{\omega_f} \mathcal{D}$ of~$\mathcal{D}$ at~$\omega_f$. By~\eqref{eq:42},
  \begin{equation}
    T_{\omega_f} \mathcal{D} = \left\{ w\in H_{\mathbb{C}} \st w \cdot h = w \cdot \omega_f = 0 \right\}.
  \end{equation}
  So the kernel of~$Q$  is~$K \eqdef \C h + \C \omega_f$.
  Moreover, let~$E$ be the orthogonal complement of~$\C h + \C \overline{\omega}_f$ (still for the intersection product).
  Since~$h \cdot \omega_f = h \cdot \overline{\omega}_f = 0$, $h\cdot h = 4$ and~$\omega_f \cdot \overline{\omega}_f > 0$,
  we check that~$E \cap K = 0$.
  In particular, the form~$Q$ is positive definite on~$E$, so there is a constant~$C > 0$
  such that
  $Q(\eta) \geq C \|\eta\|$ for any~$\eta \in E$.
  This constant is easily computable as the smallest eigenvalue of the matrix of the restriction of~$Q$ on that space, in a unitary basis, for the Hermitian norm~$\|-\|$.

  Now, let~$\gamma$ such that~$\gamma\cdot h = 0$
  and
  \begin{equation}
    \abs{\gamma \cdot \overline{\omega}_f} \|\omega_f\| \le \frac12 \|\gamma\| ( \omega_f \cdot \overline{\omega}_f ).\label{eq:17}
  \end{equation}
  Let~$a \eqdef (\gamma \cdot \overline{\omega}_f) / (\omega_f \cdot \overline{\omega}_f)$, and $\eta \eqdef \gamma - a \omega_f$,
  so that~$\eta \cdot \overline{\omega}_f = 0$ and~$\eta \cdot h = 0$, that is~$\eta \in E$.
  Since~$\omega_f$ is in the kernel of~$Q$, we have~$Q(\eta) = Q(\gamma)$, and thus~$Q(\gamma) \geq C \|\eta\|$.
  Lastly, we compute that
  \begin{equation}
    \|\eta\| \geq \|\gamma\| - \abs{a} \|\omega_f\|
             = \|\gamma\| - \abs{\frac{\gamma \cdot \overline{\omega_f}}{\omega_f \cdot \overline{\omega}_f}} \|\omega_f\|
             \geq \frac12 \|\gamma\|,
  \end{equation}
  using \eqref{eq:17}. So~$Q(\gamma) \geq \frac12 C\|\gamma\|$.
\end{proof}

The next statement is proved using the following result of \textcite{Smale_1986}.
Let~$\phi$ be an analytic function on a maximal open disc around~$0$ in~$\mathbb{C}$ with~$\phi'(0) \neq 0$.
We define
\begin{equation}
  \gsmale(\phi) \eqdef \sup_{k\geq 2} \abs{\frac{1}{k!} \frac{ \phi^{(k)}(0)}{\phi'(0)}}^{\frac{1}{k-1}} \text{ and } \bsmale(\phi) \eqdef \abs{\frac{\phi(0)}{\phi'(0)}}.
\end{equation}
If~$\bsmale(\phi) \gsmale(\phi) \leq \frac{1}{34}$, then there is a~$t \in \mathbb{C}$ such that~$\abs{t} \leq 2 \bsmale(\phi)$
and~$\phi(t) = 0$
\parencites{Smale_1986}[see also][Chapter~8, Theorem~2]{BlumCuckerShubSmale_1998}.

\begin{proposition}\label{prop:perturbation}
  For any~$f\in \widetilde{U}_4$, there exists~$\Cf$ and~$\epsf > 0$
  such that for all~$\epsilon < \epsf$ the following holds.
  For any~$\gamma \in H_\mathbb{R}$, if~$\gamma\cdot h = 0$ and~$\abs{\gamma\cdot \omega_f} \leq \epsilon \|\gamma\|$
  then there is a monomial~$m \in R_4$ and~$t\in \mathbb{C}$ such that
  $\abs{t} \leq C_f\epsilon$ and~$ \gamma \cdot \omega_{f+tm} = 0$.
\end{proposition}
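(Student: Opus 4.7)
The plan is to combine Lemma~\ref{lem:technical1} with Smale's $\alpha$-theorem applied to the analytic function $\phi_{\gamma,m}$. The near-relation $\gamma \cdot \omega_f \approx 0$ gives a small value of $\phi_{\gamma,m}(0)$, the lemma provides a monomial~$m$ with $\phi_{\gamma,m}'(0)$ bounded below by~$C\|\gamma\|$, and Smale's quantitative inverse function theorem turns these into an actual zero.

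First I would fix~$\epsilon_f$ small enough that the hypotheses of Lemma~\ref{lem:technical1} hold whenever $\abs{\gamma\cdot\omega_f} \leq \epsilon \|\gamma\|$ and $\gamma \in H_\R$: since~$\gamma$ is real, $\overline{\gamma\cdot \omega_f} = \gamma\cdot\overline{\omega}_f$, so~$\abs{\gamma\cdot\overline{\omega}_f} = \abs{\gamma\cdot\omega_f} \leq \epsilon \|\gamma\|$, and choosing $\epsilon_f \leq \tfrac{\omega_f\cdot\overline{\omega}_f}{2\|\omega_f\|}$ guarantees $\abs{\gamma\cdot\overline{\omega}_f}\|\omega_f\| \leq \tfrac12 \|\gamma\|(\omega_f\cdot\overline{\omega}_f)$. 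Lemma~\ref{lem:technical1} then furnishes a monomial~$m\in R_4$ with $\abs{\phi_{\gamma,m}'(0)} \geq C\|\gamma\|$ for a constant $C>0$ depending only on~$f$.

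Next I would estimate Smale's invariants $\bsmale(\phi_{\gamma,m})$ and $\gsmale(\phi_{\gamma,m})$. For $\beta$, we have $\phi_{\gamma,m}(0) = \gamma \cdot \omega_f$, so
\begin{equation}
  \bsmale(\phi_{\gamma,m}) = \left|\frac{\gamma \cdot \omega_f}{\phi_{\gamma,m}'(0)}\right| \leq \frac{\epsilon\|\gamma\|}{C\|\gamma\|} = \frac{\epsilon}{C}.
\end{equation}
For $\gamma_{\mathrm{Smale}}$ (distinct from the relation vector~$\gamma$), observe that $\phi_{\gamma,m}^{(k)}(0) = \gamma \cdot \ud_f^k\mathcal{P}(m,\dotsc,m)$, so by Cauchy--Schwarz $\abs{\phi_{\gamma,m}^{(k)}(0)} \leq \|\gamma\|\cdot \|\ud_f^k \mathcal{P}(m,\dotsc,m)\|$. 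Combined with the lower bound on $\phi_{\gamma,m}'(0)$, the factor~$\|\gamma\|$ cancels and the ratios $\abs{\phi_{\gamma,m}^{(k)}(0)/(k!\,\phi_{\gamma,m}'(0))}^{1/(k-1)}$ are bounded uniformly in~$\gamma$ by a constant~$\Gamma_f$ depending only on~$f$ (and on the finite set of monomials~$m$), using analyticity of $\mathcal{P}$ on a neighborhood of~$f$.

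Finally, shrinking $\epsilon_f$ further so that $\Gamma_f \epsilon_f / C \leq 1/34$ puts us in the hypothesis of Smale's theorem. It then produces a $t \in \C$ with $\abs{t} \leq 2\bsmale(\phi_{\gamma,m}) \leq 2\epsilon/C$ and $\phi_{\gamma,m}(t) = 0$, which by definition means $\gamma \cdot \omega_{f+tm} = 0$. Setting $C_f \eqdef 2/C$ finishes the proof, after one final reduction of~$\epsilon_f$ to ensure $f+tm$ remains inside the radius of convergence of~$\mathcal{P}$ around~$f$ (and hence inside~$\widetilde{U}_4$). The only delicate point is the uniform control of $\gsmale$ across all admissible~$\gamma$; this is not truly hard because of the cancellation of~$\|\gamma\|$, but it is where the conceptual content lies---everything else is bookkeeping on the constants.
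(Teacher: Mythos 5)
Your proposal is correct and follows essentially the same route as the paper: apply Lemma~\ref{lem:technical1} (using realness of $\gamma$ to pass from $\omega_f$ to $\overline{\omega}_f$), bound $\bsmale$ by $\epsilon/C$ and $\gsmale$ by a $\gamma$-independent constant coming from the higher derivatives of $\mathcal{P}$, and invoke Smale's theorem with $\Cf = 2/C$ and a suitably shrunk $\epsf$. The paper packages your ``uniform control of $\gsmale$'' as the operator-norm quantity $\Gamma = \sup_{k\geq 2}\tn{\tfrac{1}{k!}\ud_f^k\mathcal{P}}^{1/(k-1)}$ (made effective later via Griffiths--Dwork), but the argument is the same.
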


\begin{proof}
  Let~$\gamma \in H_\mathbb{R}$ such that $\gamma \cdot h = 0$ and
  \begin{equation}
    \abs{\gamma\cdot \omega_f} \leq \left(\frac{\omega_f\cdot \overline{\omega}_f}{2 \|\omega_f\|}\right) \|\gamma\|.
  \end{equation}
  Since~$\gamma$ has real coefficients, we have~$\abs{\gamma \cdot \omega_f} = \abs{\gamma \cdot \overline{\omega}_f}$
  and we may apply Lemma~\ref{lem:technical1}
  to obtain a monomial~$m$ and a constant $C$ such that
  \begin{equation}
    \abs{\phi_{\gamma, m}'(0)} \geq C \|\gamma\|.
  \end{equation}
  It follows in particular that
  \begin{equation}\label{eq:40}
    \bsmale(\phi_{\gamma,m}) \leq \frac{\abs{\gamma \cdot \omega_f}}{C \|\gamma\|}.
  \end{equation}
  Moreover, for any~$k \geq 2$, and using~$C \leq 1$,
  \begin{align}
    \label{eq:39}
    \abs{\frac{1}{k!} \frac{ \phi_{\gamma,m}^{(k)}(0)}{\phi_{\gamma,m}'(0)}}^{\frac{1}{k-1}}
    &\leq C^{-1}  \abs{\frac{ \phi_{\gamma,m}^{(k)}(0)}{k! \|\gamma\|}}^{\frac{1}{k-1}}
    = C^{-1} \abs{ \frac{\gamma}{\|\gamma\|} \cdot \frac{1}{k!}\ud^k_f \mathcal{P}(m, \dotsc, m) }^{\frac{1}{k-1}} \\
    &\leq C^{-1} \tn{ \tfrac{1}{k!} \ud_f^k \mathcal{P} }^{\frac{1}{k-1}},
  \end{align}
  where~$\ud_f^k \mathcal{P} : R_4^k \to H_\C$ is the $k$th higher derivative of~$\mathcal{P}$ at~$f$ and
  where $\tn{\cdot}$ is the operator norm defined as
  \begin{equation}
    \tn{ \tfrac{1}{k!} \ud_f^k \mathcal{P} } \eqdef \sup_{\gamma\in H_{\mathbb{C}}} \sup_{h_1,\dotsc,h_k} \frac{ \abs{ \gamma \cdot \tfrac{1}{k!} \ud^k_f \mathcal{P}(h_1,\dotsc,h_n)} }{\|\gamma\| \|h_1\| \dotsb \|h_n\|},
  \end{equation}
  with supremum taken over $h_1,\dotsc,h_n \in \mathbb{C}[w,x,y,z]_4$.
  It follows that
  \begin{equation}\label{eq:41}
    \gsmale(\phi_{\gamma, m}) \leq C^{-1} \sup_{k \geq 2}  \tn{\tfrac{1}{k!}  \ud_f^k \mathcal{P} }^{\frac{1}{k-1}}.
  \end{equation}
  Let~$\Gamma$ denote the supremum on the right-hand side of~\eqref{eq:41}.
  By Smale's theorem, together with~\eqref{eq:40} and~\eqref{eq:41},
  if $\abs{\gamma \cdot \omega_f} \leq \frac{1}{34} C^2 \Gamma^{-1} \|\gamma\|$,
  then there is a~$t \in \mathbb{C}$ such that~$\abs{t} \leq 2 C^{-1} \abs{\gamma \cdot \omega_f} \|\gamma\|^{-1}$
  and~$\gamma \cdot \mathcal{P}(f + tm) = 0$.
  The claim follows with~$\Cf \eqdef 2 C^{-1}$ and
  \begin{equation}
  \epsf \eqdef \min\left(\tfrac{1}{34} C^2 \Gamma^{-1}, \frac{\omega_f\cdot \overline{\omega}_f}{2 \|\omega_f\|}\right). \qedhere\qed
  \end{equation}
\end{proof}

The constants~$\Cf$ and~$\epsf$ are actually computable with simple algorithms.
The constant from Lemma~\ref{lem:technical1} is not hard to get with elementary linear algebra.
It only remains to compute an upper bound for~$\Gamma$.
We address this issue in \S\ref{sec:inverse_function}.

\begin{corollary}\label{coro:picard-perturbation}
  For any~$f\in \widetilde{U}_4$,
  any~$\epsilon < \epsilon_f$,
  and any~$\gamma \in H_\mathbb{Z}$,
  if~$\abs{\gamma \cdot \omega_f}\leq \frac14 \epsilon$
  then there exists a monomial~$m\in R_4$ and~$t\in \mathbb{C}$
  such that~$\abs{t} \leq C_f \epsilon$ and~$\gamma \in \Pic(X_{f+tm})$.
\end{corollary}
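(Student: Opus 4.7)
The corollary is close to a direct corollary of Proposition~\ref{prop:perturbation}, but two mismatches must be reconciled: the proposition assumes $\gamma \cdot h = 0$, whereas here $\gamma \in H_\Z$ is arbitrary, and the hypothesis in the proposition is the scaled bound $\abs{\gamma \cdot \omega_f} \leq \epsilon \|\gamma\|$ rather than the unscaled bound $\abs{\gamma \cdot \omega_f} \leq \tfrac14 \epsilon$ appearing here. I would close both gaps simultaneously by replacing $\gamma$ with an auxiliary integer class that is automatically orthogonal to $h$ and whose integrality provides a built-in lower bound on its norm.

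The natural choice is $\tilde{\gamma} \eqdef 4\gamma - (\gamma \cdot h)\, h$, which lies in $H_\Z$ and, since $h \cdot h = 4$ for the hyperplane class of a quartic, satisfies $\tilde{\gamma} \cdot h = 0$. Two observations then do all the work. First, since $h$ is orthogonal to every point of the period domain by~\eqref{eq:42}, one has $h \cdot \omega_g = 0$ identically, so $\tilde{\gamma} \cdot \omega_g = 4(\gamma \cdot \omega_g)$ for every $g$; in particular, by the Lefschetz $(1,1)$-theorem in the form~\eqref{eq:8}, $\tilde{\gamma} \in \Pic(X_g)$ if and only if $\gamma \in \Pic(X_g)$, and moreover $\abs{\tilde{\gamma} \cdot \omega_f} = 4\abs{\gamma \cdot \omega_f} \leq \epsilon$. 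Second, if $\tilde{\gamma} \neq 0$ then $\|\tilde{\gamma}\| \geq 1$, because its coordinates in a basis of $H_\Z$ are integers. Hence the hypothesis of Proposition~\ref{prop:perturbation} holds for $\tilde{\gamma}$ with the same $\epsilon < \epsf$, and its conclusion yields a monomial $m \in R_4$ and $t \in \C$ with $\abs{t} \leq \Cf \epsilon$ and $\tilde{\gamma} \cdot \omega_{f+tm} = 0$, i.e.\ $\gamma \in \Pic(X_{f+tm})$.

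The only edge case is $\tilde{\gamma} = 0$, which, using that $h$ is primitive in $H_\Z$, forces $\gamma$ to be an integer multiple of $h$; in that case $\gamma \in \Pic(X_g)$ for every $g$, and the corollary holds trivially by taking $t = 0$ and any monomial $m$. I do not anticipate a substantive obstacle: the argument is a clean algebraic reduction, and the only item deserving attention is the bookkeeping around the factor $h \cdot h = 4$, which is precisely what produces the coefficient $\tfrac14$ in the hypothesis of the corollary.
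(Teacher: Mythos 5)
Your proposal is correct and follows essentially the same route as the paper: the paper subtracts $\frac14(\gamma\cdot h)h$ to get a class $\gamma'=\tfrac14\tilde\gamma$ orthogonal to $h$, uses $\gamma'\in\frac14 H_\Z$ to get $\|\gamma'\|\ge\frac14$, and then applies Proposition~\ref{prop:perturbation} exactly as you do (handling the degenerate case by taking $t=0$ when $\gamma\cdot\omega_f=0$). Your rescaling by $4$ to stay in $H_\Z$ and your separate treatment of $\tilde\gamma=0$ are only cosmetic differences.
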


\begin{proof}
  We may assume that~$\gamma \cdot \omega_f \neq 0$ (otherwise choose any~$m$ and~$t=0$).
  Let~$\gamma' = \gamma - \frac14 (\gamma\cdot h) h$.
  Since~$h\cdot h = 4$, we have~$\gamma'\cdot h = 0$.
  Moreover~$\gamma'\cdot \omega_f = \gamma\cdot \omega_f \neq 0$.
  In particular, $\gamma' \neq 0$ and since~$\gamma' \in \frac14 H_\mathbb{Z}$,
  we have~$\|\gamma'\| \geq \frac14$
  and then
  \begin{equation}
    \abs{\gamma'\cdot \omega_f} \leq 4 \|\gamma'\| \abs{\gamma\cdot \omega_f} \leq \epsilon \|\gamma'\|,
  \end{equation}
  and Proposition~\ref{prop:perturbation} applies.
\end{proof}

\subsection{Effective bounds for the higher derivatives of the period map}\label{sec:inverse_function}

In the proof of Proposition~\ref{prop:perturbation}, only the quantity~$\Gamma$
is not clearly computable. We show in this section how to compute an upper bound for $\Gamma$
using the Griffiths--Dwork reduction. We follow here \textcite{Griffiths_1969}.

Firstly, as a variant of~\eqref{eq:31} avoiding dehomogeneization, we write
\begin{equation}
  {\mathcal{P}}(f) = \left( \frac{1}{2\pi i} \int_{T(\gamma_i)} \frac{\volproj}{f} \right)_{1\leq i\leq 22}
\end{equation}
where~$\volproj$ is the projective volume form
\begin{equation}
  \volproj \eqdef w \ud x \ud y \ud z - x \ud w \ud y \ud z + y \ud w \ud x \ud z - z\ud w \ud x \ud y.
\end{equation}
For any~$k > 0$ and~$a \in R_{4k-4}$,
we denote
\begin{equation}
  \int \frac{a \volproj}{f^k} \eqdef \left( \frac{1}{2\pi i} \int_{T(\gamma_i)} \frac{a \volproj}{f^k} \right)_{1\leq i\leq 22} \in H_\mathbb{C}.
\end{equation}
For any~$h \in R_4$ close enough to~0, we have the power series expansion
\begin{equation}\label{eq:34}
  \int \frac{\volproj}{f+h} = \sum_{k\geq 1} (-1)^{k-1} \int \frac{h^{k-1} \volproj}{f^k}.
\end{equation}

\begin{proposition}\label{prop:bound-higher-derivatives-period-map}
  For any~$k \geq 3$, there is a linear map~$G_k \colon R_{4k-4} \to R_8$ such that
  \[ \int \frac{a}{f^k} \volproj = \int \frac{G_k(a)}{f^3} \volproj. \]
  Moreover, there is a computable constant~$C$, which depends only on~$f$, such that for any~$k\geq 3$, $\tn{G_k} \leq C^{k-3}$,
  where~$R$ is endowed with the $1$-norm \eqref{eq:1norm}.
\end{proposition}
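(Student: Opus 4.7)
The plan is the classical Griffiths--Dwork reduction, implemented so as to keep uniform control of the operator norms.

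Denote by $\partial_0, \partial_1, \partial_2, \partial_3$ the partial derivatives with respect to $w, x, y, z$. The fundamental integration-by-parts identity is: if $a \in R_{4k-4}$ can be written as $a = \sum_{i=0}^{3} b_i \partial_i f$ for some $b_i \in R_{4k-8}$, then
\[
\int \frac{a \volproj}{f^k} = \frac{1}{k-1} \int \frac{\sum_i \partial_i b_i}{f^{k-1}} \volproj,
\]
which reflects the exactness of a suitable $(n-1)$-form modulo a pole-order-reducing term. Since $\Xf$ is smooth, $\partial_0 f, \dots, \partial_3 f$ form a regular sequence, so the Jacobian ring $R/J(f)$, with $J(f) = (\partial_0 f, \dots, \partial_3 f)$, is a finite-dimensional graded algebra with socle concentrated in degree $8$. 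In particular, $R_d \subseteq J(f)$ for every $d \geq 9$, so such a decomposition exists for every $a \in R_{4k-4}$ whenever $k \geq 4$. For the base case $k = 3$ one simply sets $G_3 = \mathrm{id}_{R_8}$.

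Next, for each $k \geq 4$, fix a linear section $\sigma_k : R_{4k-4} \to R_{4k-8}^{\oplus 4}$ of the multiplication map $(b_i) \mapsto \sum_i b_i \partial_i f$, for instance by normal-form reduction against a precomputed Gröbner basis of $J(f)$. Setting
\[
L_k(a) = \frac{1}{k-1} \sum_{i=0}^{3} \partial_i [\sigma_k(a)]_i
\]
transforms the identity above into $\int \frac{a \volproj}{f^k} = \int \frac{L_k(a) \volproj}{f^{k-1}}$, and iterating, $G_k = L_4 \circ L_5 \circ \cdots \circ L_k$ realises the desired reduction of pole order from $k$ down to $3$. Because $\partial_i$ acts on $R_d$ with $1$-norm at most $d$, one obtains
\[
\tn{L_k} \leq \frac{4(4k-8)}{k-1} \tn{\sigma_k} \leq 16 \tn{\sigma_k}.
\]
The cancellation $(4k-8)/(k-1) \leq 4$ is crucial: the factor $1/(k-1)$ coming from the Griffiths identity exactly absorbs the linear-in-$k$ growth produced by differentiation.

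The main obstacle is establishing a computable uniform bound $\tn{\sigma_k} \leq M$ depending only on $f$. I expect this to follow from the regular-sequence structure of $\partial_0 f, \dots, \partial_3 f$ together with the fact that $J(f)$ is generated in the single degree $3$: the Koszul resolution of the partials (or, equivalently, normal-form reduction against a fixed Gröbner basis of $J(f)$) provides a concrete section whose norm is bounded independently of $k$. Once such an $M$ is in hand, iteration yields $\tn{G_k} \leq (16 M)^{k-3}$, establishing the proposition with $C = 16 M$, which is explicitly computable from $f$ via the chosen section.
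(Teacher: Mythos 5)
Your outline matches the paper's: the Griffiths--Dwork reduction lowers the pole order one step at a time, and the crucial cancellation $\frac{4k-7}{k-1}\le 4$ (note $b_i\in R_{4k-7}$, not $R_{4k-8}$) absorbs the linear growth from differentiation. But the argument has a genuine gap exactly where you flag ``the main obstacle'': you assert, without proof, that a section $\sigma_k$ of $(b_i)\mapsto \sum_i b_i\,\partial_i f$ can be chosen with $\tn{\sigma_k}\le M$ uniformly in $k$, and the two constructions you suggest do not obviously deliver this. Normal-form reduction against a fixed Gr\"obner basis of $J(f)$ performs a number of reduction steps that grows with the degree $4k-4$, and the coefficients of the quotients can grow with each step, so there is no a priori bound on its operator norm that is independent of $k$; the Koszul resolution has the same problem. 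A bound $\tn{\sigma_k}$ growing even polynomially in $k$ would destroy the conclusion: you would get $\tn{G_k}\lesssim C^{k}(k!)^{O(1)}$, and then $\sup_k \tn{\tfrac{1}{k!}\ud_f^k\mathcal P}^{1/(k-1)}$, which is what this proposition is ultimately used to control, would no longer be finite.

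The missing idea in the paper is a specific construction of the division maps that forces uniformity. One fixes a single division map $Q_{12}\colon R_{12}\to R_9^4$ (which exists by Macaulay's theorem since the socle of $R/J(f)$ sits in degree $8$), and then extends to higher degrees recursively: write $a=\sum_{i=0}^3 x_i a_i$ with the four summands having \emph{disjoint monomial support}, and set $Q_d(a)=\sum_i x_i Q_{d-1}(a_i)$. Because multiplication by a variable preserves the $1$-norm and the supports are disjoint, one gets $\tn{Q_d}\le\tn{Q_{d-1}}\le\dotsb\le\tn{Q_{12}}$, and $\tn{Q_{12}}$ is a finite, explicitly computable quantity. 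This is the step your proposal needs and does not supply; with it in place, your iteration gives $\tn{G_k}\le(4\tn{Q_{12}})^{k-3}$ exactly as in the paper.
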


\noindent Before we begin the proof of proposition, let us show that this is enough to bound~$\Gamma$.
Let~$A \colon a \in R_8 \mapsto \int \frac{a}{f^3} \volproj \in H_\mathbb{C}$,
then, using~\eqref{eq:34} we obtain
\begin{equation}
  \int \frac{\volproj}{f+h} = \sum_{k \geq 1} (-1)^{k-1} A(G_k(h^{k-1})),
\end{equation}
and it follows that
\begin{equation}
  \tfrac{1}{k!} \ud_f^k {\mathcal{P}}(h_1,\dotsc,h_k) = (-1)^k A( G_{k+1}(h_1\dotsb h_k)).
\end{equation}
In particular,
\begin{align}
  \left\|\tfrac{1}{k!} \ud_f^k {\mathcal{P}}(h_1,\dotsc,h_k)\right\|
 & \leq  \tn{A} \tn{G_{k+1}} \|h_1 \dotsb h_n\|_1 \\
  &\leq \tn{A} \tn{G_{k+1}} \|h_1\|_1 \dotsb \|h_n\|_1,
\end{align}
and therefore
$  \tn{ \tfrac{1}{k!} \ud_f^k {\mathcal{P}}} \leq \tn{A} C^{k+1}$,
from which we get 
\begin{equation}
  \Gamma \leq C \max \left(\tn{A} C^2, 1\right)
\end{equation}

Let us remark on how to bound the operator norm of $A$ in practice. The period integrals can be approximated to arbitrary precision and with rigorous error bounds as in \parencite{Sertoz_2019}. This construction gives a small neighbourhood of $A$ in the matrix space. In practice, we represent this neighbourhood as a matrix $A'$ of complex balls and compute the operator norm of $A'$ as usual but using complex ball arithmetic. This will return a real open interval containing $\tn{A} \neq 0$. If the precision is high enough, $0$ will not be contained in the closure of this interval and we can take the lower bound of the interval.

\subsubsection{Proof of Proposition~\ref{prop:bound-higher-derivatives-period-map}}

Let~$R = \mathbb{C}[w,x,y,z]$.
We define two families of maps for this proof.
First, for~$d \geq 12$, a multivariate division map $Q_d \colon R_d \to R_{d-3}^4$,
such that for any~$a \in R_d$,
\begin{equation}\label{eq:3}
  a = \sum_{i=0}^3 Q_d(a)_i \, \partial_i f.
\end{equation}
Note that such a map exists as soon as~$d \geq 12$ by a theorem due to Macaulay \parencite[see][Corollaire, p.~169]{Lazard_1977}.
The choice of~$Q_d$ is not unique. We fix $Q_{12}$ arbitrarily
and define $Q_d(a)$, for~$d > 12$ and~$a\in R_{12}$, as follows.
Write~$a = \sum_{i=0}^3 x_i a_i$,
in such a way that the terms of the sum have disjoint monomial support, and define
\begin{equation}
  Q_d(a) = \sum_{i=0}^3 x_i Q_{d-1}(a_i).
\end{equation}
It is easy to check that this definition satisfies~\eqref{eq:3}.

Second, for~$k \geq 3$, we define~$G_k \colon R_{4k-4} \to R_{8}$
as follows. Begin with $G_3 = \mop{id}$ and then define $G_k$ for~$k \geq 4$ inductively as follows.
For~$a \in R_{4k-4}$ we write~$(b_0,\dotsc, b_3) = Q_{4k-4}(a)$ and define
\begin{equation}\label{eq:37}
  G_k (a) \eqdef G_{k-1} \left( \frac{1}{k-1}(\partial_0 b_0 + \dotsb + \partial_3 b_3) \right).
\end{equation}
This map is the Griffiths--Dwork reduction, and it satisfies
\begin{equation}
  \int_\gamma \frac{a \Omega}{f^k} = \int_\gamma \frac{G_k(a) \Omega}{f^{3}}.
\end{equation}

\begin{lemma}\label{lem:tnQ}
  For any~$d \geq 12$,
  $\tn{Q_d} \leq \tn{Q_{12}}$,
  where~$R$ is endowed with the $1$-norm and~$R^4$ with the norm~$\|(f_0,\dotsc,f_3)\|_1  \eqdef \|f_0\|_1+\dotsb+\|f_3\|_1$.
\end{lemma}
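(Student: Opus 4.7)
The plan is to prove the bound by induction on $d$, starting from the base case $d = 12$ where the inequality is an equality by definition. For $d > 12$, I would exploit the recursive definition $Q_d(a) = \sum_{i=0}^{3} x_i Q_{d-1}(a_i)$ together with two elementary properties of the $1$-norm: (a)~multiplication by a single variable $x_i$ is an isometry, since $\|x_i p\|_1 = \|p\|_1$ (this holds because $p \mapsto x_i p$ is injective on monomials and preserves coefficients); and (b)~if the terms of a sum have pairwise disjoint monomial supports, then the $1$-norm is additive.

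Given $a \in R_d$, the decomposition $a = \sum_{i=0}^{3} x_i a_i$ is produced by assigning each monomial of $a$ to exactly one variable dividing it (say, the lexicographically smallest such variable), so by construction the four summands $x_i a_i$ have pairwise disjoint monomial supports. Combining (a) and (b), this yields $\|a\|_1 = \sum_{i=0}^{3} \|x_i a_i\|_1 = \sum_{i=0}^{3} \|a_i\|_1$, which is the key identity we will need on the source side of $Q_d$.

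On the target side, for each component $j \in \{0,1,2,3\}$ one has $(Q_d(a))_j = \sum_{i=0}^{3} x_i (Q_{d-1}(a_i))_j$, but here the four pieces $x_i (Q_{d-1}(a_i))_j$ need not have disjoint monomial supports, so I only get the triangle inequality rather than equality. Using (a) once more and then summing over $j$, this gives
\begin{equation}
  \|Q_d(a)\|_1 \;=\; \sum_{j=0}^{3} \|(Q_d(a))_j\|_1 \;\leq\; \sum_{j=0}^{3}\sum_{i=0}^{3} \|(Q_{d-1}(a_i))_j\|_1 \;=\; \sum_{i=0}^{3} \|Q_{d-1}(a_i)\|_1.
\end{equation}
Applying the inductive hypothesis $\|Q_{d-1}(a_i)\|_1 \leq \tn{Q_{d-1}}\,\|a_i\|_1$ and invoking the additivity identity from the previous paragraph then yields $\|Q_d(a)\|_1 \leq \tn{Q_{d-1}}\,\|a\|_1$, hence $\tn{Q_d} \leq \tn{Q_{d-1}} \leq \tn{Q_{12}}$ by induction.

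The only step that requires any care is property (b), namely checking that the decomposition $a = \sum x_i a_i$ really can be chosen with disjoint supports, so that the source-side norm is exactly additive; without this one would pick up a factor of $4$ at each inductive step and lose the uniform bound. Everything else is a bookkeeping exercise with the triangle inequality.
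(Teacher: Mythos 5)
Your proof is correct and follows essentially the same route as the paper's: induct on $d$, use that multiplication by a variable is a $1$-norm isometry and the triangle inequality on the target side, and use the disjoint-support additivity $\|a\|_1 = \sum_i \|a_i\|_1$ on the source side to conclude $\tn{Q_d} \leq \tn{Q_{d-1}}$. Your explicit remark that the decomposition can be chosen with disjoint supports (and that this is where the argument avoids a spurious factor of $4$) is exactly the point the paper flags at the end of its own proof.
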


\begin{proof}
  For any~$a\in R_d$,
  \begin{align}
    \|Q_d(a)\|_1 &= \sum_{i=0}^3 \|Q_d(a)_i\|_1 \leq \sum_{i=0}^3 \sum_{j=0}^3 \|x_j Q_{d-1}(a_j)_i \|_1\\
               &= \sum_{i=0}^3 \sum_{j=0}^3 \|Q_{d-1}(a_j)_i \|_1 = \sum_j \|Q_{d-1}(a_j)\|_1 \\
               &\leq \tn{Q_{d-1}} \sum_j \|a_j\|_1 = \tn{Q_{d-1}} \|a\|_1,
  \end{align}
  using, for the last equality, that the terms~$a_j$ have disjoint monomial support.
\end{proof}

\begin{lemma}\label{lem:tn-Gk}
  For any~$k \geq 3$,
  $\tn{G_k} \leq ( 4\tn{Q_{12}} )^{k-3}$,
  where~$R$ is endowed with the $1$-norm.
\end{lemma}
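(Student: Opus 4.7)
I would prove the bound by induction on $k$, taking $k=3$ as the base case where $G_3 = \mathrm{id}$ gives $\tn{G_3} = 1 = (4\tn{Q_{12}})^0$. The inductive step combines three ingredients: the inductive hypothesis on $G_{k-1}$, the uniform bound on $Q_{4k-4}$ from Lemma~\ref{lem:tnQ}, and an elementary $1$-norm estimate for partial derivatives of homogeneous polynomials.

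Given $a \in R_{4k-4}$, set $(b_0,\dots,b_3) = Q_{4k-4}(a)$. Unfolding the recursion~\eqref{eq:37} and applying the inductive hypothesis and the triangle inequality gives
\begin{equation}
  \|G_k(a)\|_1 \leq \tn{G_{k-1}} \cdot \frac{1}{k-1} \sum_{i=0}^3 \|\partial_i b_i\|_1.
\end{equation}
The key elementary estimate is that for a homogeneous polynomial $p = \sum_\alpha c_\alpha x^\alpha$ of degree $d$, the partial derivative acts diagonally on monomials with $\partial_i p = \sum_\alpha \alpha_i c_\alpha x^{\alpha-e_i}$, so no cancellation occurs and $\|\partial_i p\|_1 = \sum_\alpha \alpha_i |c_\alpha| \leq d\,\|p\|_1$ since $\alpha_i \leq |\alpha|$. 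Applied to each $b_i \in R_{4k-5}$ and summed over $i$, this yields $\sum_i \|\partial_i b_i\|_1 \leq (4k-5)\sum_i\|b_i\|_1 = (4k-5)\|Q_{4k-4}(a)\|_1$, which by Lemma~\ref{lem:tnQ} is at most $(4k-5)\tn{Q_{12}}\|a\|_1$.

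Combining these inequalities produces the recursion $\tn{G_k} \leq \frac{4k-5}{k-1}\,\tn{Q_{12}}\,\tn{G_{k-1}}$. The clean numerical fact $4k-5 < 4(k-1)$ (valid for all $k \geq 4$) forces $\frac{4k-5}{k-1} < 4$, so each inductive step contributes at most a factor of $4\tn{Q_{12}}$, and the induction closes to give $\tn{G_k} \leq (4\tn{Q_{12}})^{k-3}$. I do not anticipate any real obstacle: the argument is a direct unwinding of the recursive definition, and the factor of $4$ in the final bound reflects exactly the four partial derivatives appearing in the Griffiths--Dwork step. The only point worth checking carefully is the elementary derivative estimate, which is tight enough that the clean exponential bound emerges without a subexponential correction.
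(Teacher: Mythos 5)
Your proof is correct and follows essentially the same route as the paper: induction on $k$, the uniform bound $\tn{Q_{4k-4}}\le\tn{Q_{12}}$ from Lemma~\ref{lem:tnQ}, the termwise estimate $\|\partial_i p\|_1\le \deg(p)\,\|p\|_1$, and the observation that the resulting ratio is below $4$. One tiny slip: since $Q_d$ maps $R_d$ to $R_{d-3}^4$ (as $\partial_i f$ has degree $3$), each $b_i$ lies in $R_{4k-7}$ rather than $R_{4k-5}$, but your weaker factor $4k-5$ still satisfies $\tfrac{4k-5}{k-1}<4$ for $k\ge 4$, so the argument closes all the same.
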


\begin{proof}
  We proceed by induction on~$k$ (the base case $k=3$ is trivial since~$G_3=\mop{id}$).
  Let~$a \in R_{4k-4}$ and~$(b_0,\dotsc,b_3) = Q_{4k-4}(a)$.
  By~\eqref{eq:37}, we have
  \begin{align}
    \|G_k(a)\|_1 &\leq \frac{\tn{G_{k-1}}}{k-1} \left( \| \partial_0 b_0 \|_1 + \dotsb + \|\partial_3 b_3\|_1 \right).
  \end{align}
  By induction hypothesis, $\tn{G_{k-1}} \leq (4 \tn{Q_{12}} )^{k-4}$
  and moreover $\|\partial_i b_i\|_1 \leq (4k-7) \|b_i\|_1$, since each~$b_i$ has degree~$4k-7$.
  If follows that
  \begin{align}
    \|G_k(a)\|_1 &\leq (4\tn{Q_{12}})^{k-4} \frac{4k-7}{k-1} \left( \|b_0\|_1 + \dotsb + \|b_3\|_1 \right).
  \end{align}
  Next, we note that~$\|b_0\|_1 + \dotsb + \|b_3\|_1 = \|Q_{4k-4}(a)\|_1$
  and, by Lemma~\ref{lem:tnQ}, $\tn{Q_{4k-4}(a)}\leq \tn{Q_{12}}$.
  Therefore
  \begin{equation}
    \|G_k(a)\|_1 \leq (4 \tn{Q_{12}})^{k-3} \|a\|_1,
  \end{equation}
  and the claim follows.
\end{proof}

\section{The Noether--Lefschetz locus}\label{sec:NL}

\subsection{Basic properties}
We define the Noether--Lefschetz locus for quartic surfaces and review a few classical properties, especially algebraicity, with a view towards Theorem~\ref{thm:nl-bounds} about the degree and the height of the equations defining the components of the Noether--Lefschetz locus.

\subsubsection{Definition}\label{sec:defNL}

The Noether--Lefschetz locus of quartics~$\NL$ is the set of all~$f\in U_4$
such that the rank of~$\Pic(\Xf)$ is at least~2. Equivalently, in view of~\eqref{eq:8},
$\NL$ is the set of quartic polynomials~$f$ whose primitive periods~\eqref{eq:intro} are $\Z$-linearly dependent. %

The set $\NL$ is locally the union of smooth analytic hypersurfaces in~$U_4$. To see this,
let~$\wNL$ be the lift of~$\NL$ in the universal covering~$\widetilde{U}_4$ of~$U_4$. 
Recall $\mathcal{P}\colon \widetilde{U}_4 \to \mathcal{D}$ is the period map. 
The Lefschetz (1,1)-theorem implies 
\begin{equation}\label{eq:1}
  \wNL = \bigcup_{\gamma \in H_\mathbb{Z} \setminus \mathbb{Z} h} \mathcal{P}^{-1} \left\{ w \in \mathcal{D} \st w \cdot \gamma = 0 \right\}.
\end{equation}
That is, $\wNL$ is the pullback of smooth hyperplane sections of $\mathcal{D}$. 
Since~$\mathcal{P}$ is a submersion, $\wNL$ is the union of smooth analytic hypersurfaces. It follows that $\NL$ is locally the union of smooth analytic hypersurfaces.

We break $\NL$ into algebraic pieces as follows. For any integers~$d$ and~$g$,
let~$\NL_{d,g}$ be the set
\begin{equation}\label{eq:4}
  \NL_{d,g} = \left\{ f \in U_4 \st \exists \gamma \in \Pic(\Xf) \setminus \mathbb{Z} h: \gamma\cdot h = d \text{ and } \gamma\cdot \gamma=2g - 2 \right\},
\end{equation}
By replacing~$\gamma$ by~$\gamma+h$ or~$-\gamma$, we observe that
\begin{equation}\label{eq:10}
  \NL_{d,g} = \NL_{d+4, g+d+2} = \NL_{-d, g}.
\end{equation}
In particular, $\NL_{d,g}$ is equal to some~$\NL_{d',g'}$ with~$d' > 0$ and~$g' \geq 0$, so that
\begin{equation}
  \NL = \bigcup_{d > 0} \bigcup_{g \geq 0} \NL_{d,g}.
\end{equation}

For~$\gamma \in H_\mathbb{Z}$, let~$\Delta(\gamma) = (h \cdot \gamma)^2 - 4 \gamma \cdot \gamma$.
It is the negative of the discriminant of the lattice generated by~$h$ and~$\gamma$ in~$H_\mathbb{Z}$,
with respect to the intersection product (and it is zero if~$\gamma \in \mathbb{Z} h$).
It follows from the Hodge index theorem \parencite[see][Theorem~V.1.9]{Hartshorne_1977}
that for any~$f\in U_4$ and any~$\gamma \in \Pic(\Xf)$, $\Delta(\gamma) \geq 0$, with equality if and only if~$\gamma \in \mathbb{Z} h$.
If~$\gamma\cdot h = d$ and~$\gamma\cdot \gamma = 2g-2$, then~$\Delta(\gamma) = d^2 - 8g+8$.
We obtain therefore that for any~$d > 0$ and~$g \geq 0$,
\begin{equation}
  \NL_{d,g} = \footnotesize
  \begin{cases}
    \left\{ f \in U_4 \st \exists \gamma \in \Pic(\Xf) : \gamma\cdot h = d \text{ and } \gamma\cdot \gamma=2g - 2 \right\} & \text{if $d^2 > 8g-8$}\\
    \varnothing &\text{otherwise.}
  \end{cases}
\end{equation}

It is in fact more natural to introduce, for~$\Delta > 0$, the following locus
\begin{align}\label{eq:23}
  \NL_\Delta &\eqdef \left\{ f \in U_4 \st \exists \gamma \in \Pic(\Xf) : \Delta(\gamma) = \Delta \right\}\\
             &= \bigcup_{\substack{d > 0\\ d^2 \equiv \Delta \text{ mod } 8}} \NL_{d, \frac{d^2-\Delta}{8} + 1}.
\end{align}
Due to~\eqref{eq:10}, $\NL_\Delta$ reduces to a single~$\NL_{d,g}$. Namely,
\begin{equation}\label{eq:11}
  \NL_\Delta =
  \begin{cases}
    \NL_{4t, 2t^2 + \frac{8-\Delta}8},&\text{if $\Delta\equiv 0 \ \mathrm{mod}\ 8$,}\\
    \NL_{4t+1, 2t^2 + t + \frac{9-\Delta}8} & \text{if $\Delta\equiv 1 \ \mathrm{mod}\ 8$,} \\
    \NL_{4t+2, 2t^2 + 2t + \frac{12 - \Delta}8}, &\text{if $\Delta\equiv 4 \ \mathrm{mod}\ 8$,} \\
    \varnothing & \text{otherwise,}
  \end{cases}
\end{equation}
where~$t = \lceil \frac14\sqrt{\Delta} \rceil$. Conversely, each $\NL_{d,g} = \NL_{d^2 - 8g +8}$.

\subsubsection{Algebraicity}
For any~$d > 0$ and~$g\geq 0$, the set~$\NL_{d,g}$ is either empty or an
algebraic hypersurface in~$U_4$.
This is a classical result \parencite[e.g.][Theorem~3.32]{Voisin_2003}
which we recall here to obtain an explicit algebraic description of~$\NL_{d,g}$.

\begin{lemma}\label{lem:curve-pic}
  For any~$f\in U_4$, $d > 0$ and~$g\geq 0$ we have: $f \in \NL_{d,g}$ if and only if~$\Xf$ contains an effective divisor
  with Hilbert polynomial~$t\mapsto dt + 1-g$.
\end{lemma}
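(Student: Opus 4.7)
The strategy is to apply Riemann--Roch on the K3 surface $\Xf$, together with Serre duality and the ampleness of $h$, to pass between divisor classes in $\Pic(\Xf)$ with prescribed intersection numbers and effective curves with prescribed Hilbert polynomial.

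For the forward direction, assume $f \in \NL_{d,g}$ and pick $\gamma \in \Pic(\Xf) \setminus \Z h$ with $\gamma \cdot h = d$ and $\gamma^2 = 2g-2$. Riemann--Roch on the K3 gives $\chi(\co_{\Xf}(\gamma)) = \tfrac12 \gamma^2 + 2 = g+1$. Serre duality combined with $K_{\Xf} = 0$ yields $h^2(\co_{\Xf}(\gamma)) = h^0(\co_{\Xf}(-\gamma))$; but $(-\gamma) \cdot h = -d < 0$ and $h$ is ample, so $-\gamma$ cannot be effective and $h^2 = 0$. Hence $h^0(\co_{\Xf}(\gamma)) \geq g+1 \geq 1$, so there is an effective divisor $D \subset \Xf$ with $[D] = \gamma$. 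Either Riemann--Roch on the curve $D$, or the exact sequence $0 \to \co_{\Xf}(th-D) \to \co_{\Xf}(th) \to \co_D(th) \to 0$, then gives
\[ \chi(\co_D(th)) = ([D]\cdot h)\,t + \chi(\co_D) = d\,t + 1 - p_a(D), \]
and adjunction on $\Xf$ yields $2p_a(D) - 2 = D^2 = 2g-2$, so the Hilbert polynomial equals $dt + 1 - g$.

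Conversely, given an effective divisor $D \subset \Xf$ with Hilbert polynomial $dt + 1 - g$, one reads off $[D] \cdot h = d$ from the leading coefficient and $\chi(\co_D) = 1-g$, i.e.\ $p_a(D) = g$, from the constant term; adjunction then forces $[D]^2 = 2g-2$. It remains to check $[D] \notin \Z h$, which follows from the Hodge index theorem in the nontrivial range $d^2 > 8g-8$: the Gram matrix of $h$ and $[D]$ has determinant $4(2g-2) - d^2 < 0$, so $h$ and $[D]$ are $\Q$-linearly independent, and primitivity of $h$ in $H_\Z$ excludes $[D] \in \Z h$. In the opposite regime $d^2 \leq 8g-8$ the set $\NL_{d,g}$ is empty by the same Hodge index argument, so the equivalence is vacuous on that side.

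The single genuine difficulty is the effectivity step in the forward direction, which crucially uses the positivity of $\gamma \cdot h$ together with the ampleness of $h$ to kill $h^2(\co_{\Xf}(\gamma))$; the remainder reduces to standard bookkeeping relating Hilbert polynomials, intersection numbers, and arithmetic genus via Riemann--Roch and adjunction.
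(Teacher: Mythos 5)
Your proof is correct and follows essentially the same route as the paper's: Riemann--Roch on the K3 surface together with the positivity of $\gamma\cdot h$ yields the effective representative (the paper phrases the vanishing of $h^0(\mathcal{O}_{\Xf}(-\gamma))$ as ``$-C$ cannot be effective'' rather than invoking Serre duality, but it is the same argument), and the converse is the same degree/arithmetic-genus bookkeeping via adjunction. You are in fact slightly more careful than the paper about the condition $[D]\notin\Z h$ in the definition of $\NL_{d,g}$, though note that in the boundary case $d^2=8g-8$ the right-hand side of the equivalence need not be empty (e.g.\ hyperplane sections for $(d,g)=(4,3)$), an edge case that the paper's proof also glosses over and that is harmless since only the range $d^2>8g-8$ is used afterwards.
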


\begin{proof}
  Assume that~$\Xf$ contains an effective divisor~$C$ with Hilbert polynomial~$t\mapsto td + 1-g$.
  Since $\Xf$ is smooth, $C$ is a locally principal divisor and gives an element~$\gamma$ of~$\Pic \Xf$.
  The integer~$d$ is the degree of $C$, so it is the number of points in the intersection with a generic hyperplane, that is~$d = \gamma\cdot h$. 
  Moreover, $g$ is the arithmetic genus of~$C$, which is determined by $2g-2 = \gamma\cdot \gamma$ \parencite[Ex.~III.5.3(b) and~V.1.3(a)]{Hartshorne_1977}.
  So~$f \in \NL_{d,g}$. 

  Conversely,
  let~$f \in \NL_{d,g}$. By definition, there is a divisor~$C$ on~$\Xf$ such that its class~$\gamma$ in~$\Pic \Xf$
  satisfies~$\gamma \cdot h=d$ and~$\gamma\cdot \gamma = 2g-2$.
  From the Riemann--Roch theorem for surfaces \parencite[V.1.6]{Hartshorne_1977} we get:
  \begin{equation*}
    \dim H^0(X, \mathcal{O}_X(C)) + \dim H^0(X, \mathcal{O}_X(-C)) \geq \tfrac12  \gamma\cdot \gamma + 2 = g+1 > 0
  \end{equation*}
  so that either $C$ or~$-C$ must be linearly equivalent to an effective
  divisor. Since~$\gamma \cdot h > 0$, $-C$ can not be effective and therefore $C$ must
  be. As above, the Hilbert polynomial of~$C$ is given by~$t\mapsto dt+1-g$.
\end{proof}

In light of Lemma~\ref{lem:curve-pic}, the algebraicity of~$\NL_{d,g}$ is proved
by using the Hilbert scheme~$\mathcal{H}_{d,g}$. The Hilbert scheme~$\mathcal{H}_{d,g}$ of
degree~$d$ and genus~$g$ curves in~$\mathbb{P}^3$ is a projective scheme that
parametrizes all the subschemes of~$\mathbb{P}^3$ whose Hilbert polynomial is
$t\mapsto dt + 1 - g$.

The Hilbert scheme~$\mathcal{H}_{d,g}$ may contain components that are not desirable for our purposes. 
For example~$\mathcal{H}_{3,0}$, which contains twisted cubics in~$\mathbb{P}^3$,
contains two irreducible components \parencite{PieneSchlessinger_1985}: a $12$-dimensional component
that is the closure of the space of all smooth cubic rational curves in~$\ppp$; and a $15$-dimensional component
parametrizing the union of a plane cubic curve with a point in~$\ppp$. 
We would be only interested in the first, not in the second component. 
So we introduce~$\mathcal{H}'_{d,g}$, the union of components of~$\mathcal{H}_{d,g}$ obtained by
removing the components that does not correspond to locally-complete-intersection pure-dimensional subschemes of $\ppp$.

When~$d^2 > 8g-8$, Lemma~\ref{lem:curve-pic} can be rephrased as
\begin{equation}\label{eq:9}
  \NL_{d,g} = \mop{proj}_1 \left\{ (f, C) \in U_4 \times \mathcal{H}_{d,g}' \st C \subset \Xf \right\},
\end{equation}
where~$\mop{proj}_1$ denotes the projection~$U_4 \times \mathcal{H}_{d,g}' \to U_4$.
Since $\mathcal{H}_{d,g}'$ is a projective variety, and the condition~$C\subset \Xf$ is algebraic,
this shows that~$\NL_{d,g}$ is a closed subvariety of~$U_4$ (for more details about this construction, see \cite[\S 3.3]{Voisin_2003}). 

We note furthermore that $\NL_{d,g}$ is clearly invariant under the action of the Galois group of algebraic numbers. 
Therefore, it can be defined over the rational numbers.

As a consequence, for any nonnegative integers~$d$ and~$g$,
there is a squarefree primitive homogeneous polynomial~$\pNL_{d,g} \in \mathbb{Z}[u_1,\dotsc,u_{35}]$ 
in the 35~coefficients of the general quartic polynomial 
that is unique up to sign and
whose zero locus is~$\NL_{d,g}$ in~$U_4$.
Similarly, we define~$\pNL_\Delta$ upto sign as the unique squarefree primitive polynomial vanishing exactly on~$\NL_\Delta$.

\subsection{Height of multiprojective varieties}

The mainstay of our results is a bound on the degree and size of the coefficients of the polynomials~$\pNL_{d,g}$.
The determination of these bounds is based on~\eqref{eq:9} and
involves the theory of heights of multiprojective varieties
as developped by \textcite{DAndreaKrickSombra_2013}, and, before them,
\textcite{BostGilletSoule_1991,Philippon_1995,KrickPardoSombra_2001,Remond_2001,Remond_2001a}, among others.
We recall here the results that we need, following \textcite{DAndreaKrickSombra_2013}.

\subsubsection{Heights of polynomials}

Let $f = \sum_{\alpha} c_\alpha {\mathbf x}^\alpha \in \mathbb{C}[x_1,\dots,x_n]$. We recall the following different measures of height of $f$:
\begin{align}
  \norm{f}_1 &\eqdef  \sum_\alpha \abs{c_\alpha}, \label{eq:1norm} \\
  \norm{f}_{\sup} &\eqdef  \sup_{\abs{x_1}=\dotsb=\abs{x_n}=1} \abs{f(\mathbf x)},\\
  m(f) &\eqdef \int_{[0,1]^n} \log\abs{f \left( e^{2\pi i t_1}, \dotsc, e^{2\pi i t_n} \right) } \ud t_1 \dotsb \ud t_n.
\end{align}

\begin{lemma}[{\cite[Lemma~2.30]{DAndreaKrickSombra_2013}}]
  \label{lem:sup_dominates}
  For any homogeneous polynomial $f \in \mathbb{C}[x_1,\dots,x_n]$,
  \begin{equation*}
    \exp(m(f)) \le \norm{f}_{\sup} \le \norm{f}_{1} \leq \exp(m(f)) (n+1)^{\deg f}.
  \end{equation*}
\end{lemma}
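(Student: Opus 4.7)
Plan: The statement consists of three inequalities, which I would prove separately.

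The first inequality $\exp(m(f)) \le \|f\|_{\sup}$ is immediate from monotonicity of the integral: pointwise on the unit torus $[0,1]^n$ we have $\log|f(e^{2\pi i t_1},\ldots,e^{2\pi i t_n})| \le \log \|f\|_{\sup}$, and integrating both sides yields $m(f) \le \log \|f\|_{\sup}$. The second inequality $\|f\|_{\sup} \le \|f\|_1$ is the triangle inequality on the torus: for $x = (e^{2\pi i t_1},\ldots,e^{2\pi i t_n})$ we have $|x^\alpha| = 1$ for every exponent $\alpha$, so $|f(x)| \le \sum_\alpha |c_\alpha| = \|f\|_1$, and taking the supremum in $t$ gives the bound.

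The third inequality $\|f\|_1 \le \exp(m(f)) \, (n+1)^{\deg f}$ is the substantive one, and I would prove it in the spirit of Mahler's classical inequality. First I would establish the univariate case: for $P \in \mathbb{C}[x]$ of degree $d$, $\|P\|_1 \le 2^d \exp(m(P))$. This follows from the factorization $P = a \prod_{j=1}^d (x - \alpha_j)$ by expressing each coefficient as an elementary symmetric function $a \cdot e_k(\alpha_1,\ldots,\alpha_d)$, bounding $|e_k(\alpha)| \le \binom{d}{k} \prod_{|\alpha_j| > 1} |\alpha_j|$, and summing via $\sum_k \binom{d}{k} = 2^d$. Then I would reduce the multivariate homogeneous case to the univariate one by dehomogenizing $f$ (substituting $x_n = 1$, which leaves both $\|f\|_1$ and $m(f)$ unchanged after a torus change of variables) and iteratively applying the univariate Mahler bound in each remaining variable, integrating the slice-wise inequality against the Haar measure on the torus factors that are held fixed.

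The main obstacle will be tracking the constant: a naive iteration produces a bound of order $2^{(n-1)\deg f}$, which beats $(n+1)^{\deg f}$ only when $n \le 3$. To attain the claimed bound for all $n$, one must exploit the fact that $f$ has bounded \emph{total} degree $d$ rather than bounded partial degrees taken one at a time. I expect the refinement to come either from a careful induction on $n$ in which the Mahler factor is weighted against the reduced total degree of the coefficients when $f$ is viewed as a polynomial in one variable over the remaining polynomial ring, or alternatively from a direct expansion/interpolation argument comparing $\|f\|_1$ to the values of $|f|$ on a finite subset of the torus of size roughly $(n+1)^d$, where the monomial count is tightened from $\binom{n+d-1}{d}$ up to $(n+1)^d$ to absorb residual constants coming from Mahler's inequality.
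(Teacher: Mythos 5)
First, a remark on the comparison itself: the paper offers no proof of this statement. It is quoted verbatim from D'Andrea--Krick--Sombra (their Lemma~2.30) and used as a black box, so there is no in-paper argument to measure your proposal against; I can only assess your plan on its own terms.

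Your handling of the first two inequalities is correct and complete. The genuine gap is in the third inequality, and you flag it yourself: you rightly observe that dehomogenizing and iterating the univariate Mahler bound variable by variable only yields $\norm{f}_1 \le 2^{(n-1)\deg f}\exp(m(f))$, which is weaker than claimed once $n\ge 4$, and you then offer two candidate repairs without carrying out either. The second candidate (interpolating $\norm{f}_1$ against values of $\abs{f}$ on a finite subset of the torus) cannot work: it only controls $\norm{f}_1$ by a multiple of $\norm{f}_{\sup}$, and the ratio $\norm{f}_{\sup}/\exp(m(f))$ can itself be as large as $2^{\deg f}$ (take $f=(x-1)^d$, for which the stated inequality is already an equality), so the combined loss necessarily exceeds $(n+1)^{\deg f}$. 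The first candidate is the right one, but the concrete ingredient it requires is the multinomial refinement of Mahler's coefficient bound: for $g=\sum_\alpha c_\alpha x^\alpha$ of \emph{total} degree at most $d$ in $k$ variables, one has $\abs{c_\alpha} \le \binom{d}{\alpha_1,\dots,\alpha_k,\,d-\abs{\alpha}}\exp(m(g))$. This is proved by induction on $k$: write $g=\sum_j g_j x_k^j$, use Mahler's lemma $m(g_j) \le m(g)+\log\binom{\deg_{x_k}g}{j}$ (obtained by applying the univariate coefficient bound fiberwise and integrating the logarithm over the remaining torus variables), note that $g_j$ has total degree at most $d-j$, and observe that $\binom{d}{j}\binom{d-j}{\alpha',\,d-j-\abs{\alpha'}}$ telescopes to the full multinomial coefficient. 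Summing over all $\alpha$ with $\abs{\alpha}\le d$ gives $\norm{g}_1 \le (k+1)^d\exp(m(g))$ by the multinomial theorem; applied to the homogeneous $f$ in $n$ variables (or to its dehomogenization in $n-1$ variables, which preserves both $\norm{\cdot}_1$ and $m(\cdot)$ as you note), this yields the stated $(n+1)^{\deg f}$ with room to spare. Without this coefficient lemma your plan does not close at the one substantive step.
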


\subsubsection{The extended Chow ring}

The extended Chow ring
\parencite[Definition~2.50]{DAndreaKrickSombra_2013} is a tool to track a measure of height of multiprojective
varieties when performing intersections and projections.
We present here a very brief summary.
Bold letters refer
to multi-indices and all varieties are considered over~$\mathbb{Q}$.
Let~$\mathbf n \in \mathbb{N}^{r}$ and
let~$\mathbb{P}^{\mathbf{n}}$ be the multiprojective
space~$\mathbb{P}^{\mathbf n} = \mathbb{P}^{n_1}\times \dotsb \times \mathbb{P}^{n_m}$.

An \emph{algebraic cycle} is a finite $\mathbb{Z}$-linear combination $\sum_{V} n_V V$ of irreducible subvarieties of~$\mathbb{P}^\bfn$.
The \emph{irreducible components} of an algebraic cycle as above are the irreducible varieties~$V$ such that~$n_V \neq 0$.
An algebraic cycle is \emph{equidimensional} if all its irreducible components have the same dimension.
An algebraic cycle is \emph{effective} if~$n_V \geq 0$ for all~$V$.
The \emph{support} of~$X$, denoted by~$\mop{supp} X$, is the union of the irreducible components of~$X$.

Let~$A^*(\mathbb{P}^{\mathbf{n}}; \mathbb{Z})$ be the extended Chow ring, namely
\begin{equation}\label{eq:An}
  A^*(\mathbb{P}^{\mathbf{n}}; \mathbb{Z}) \eqdef \mathbb{R}[\eta, \theta_1,\dotsc,\theta_m]/(\eta^2, \theta_1^{n_1+1},\dotsc,\theta_m^{n_m+1}),
\end{equation}
where $\theta_i$ is the class of the pullback of a hyperplane from $\mathbb{P}^{n_i}$ and $\eta$ is used to keep track of heights of varieties.   
For two elements~$a$ and~$b$ of this ring, we write~$a \leq b$ when the coefficients of~$b-a$ in the monomial basis are nonnegative.

To an algebraic cycle~$X$ of~$\mathbb{P}^{\mathbf{n}}$ we associate 
an element~$[X]_\mathbb{Z}$ of~$A^*(\mathbb{P}^{\mathbf{n}}; \mathbb{Z})$
\parencite[Definition~2.50]{DAndreaKrickSombra_2013}.
If~$X$ is effective, then~$[X]_\mathbb{Z} \geq 0$.
The coefficients of the terms in $[X]_{\mathbb{Z}}$
for monomials not involving $\eta$ record the usual multi-degrees of $X$. The
terms involving $\eta$ record mixed canonical heights of $X$. The definition of
these heights is based on the heights of various Chow forms associated to $X$
\parencite[\S2.3]{DAndreaKrickSombra_2013}. For the computations in this paper,
we only need the following results.

Let~$f \in \mathbb{Z}[\mathbf{x_1},\dotsc,\mathbf{x_r}]$
be a nonzero multihomogeneous polynomial with respect to the group of variables~$\bx_1,\dotsc,\bx_n$.
We assume that~$f$ is \emph{primitive}, that is, the g.c.d.\ of the coefficients of $f$ is~1.
The element associated in $A^*(\mathbb{P}^{\mathbf{n}}; \mathbb{Z})$ to the hypersurface~$V(f) \subset \mathbb{P}^{\mathbf n}$
is \parencite[Proposition~2.53(2)]{DAndreaKrickSombra_2013}
\begin{equation}\label{eq:22}
  [V(f)]_{\mathbb{Z}} = m(f) \eta + \deg_{\mathbf{x}_1}(f) \theta_1 +  \dotsb + \deg_{\mathbf{x}_r}(f) \theta_r.
\end{equation}
To such a polynomial $f$, we also associate \parencite[Eq.~(2.57)]{DAndreaKrickSombra_2013}
\begin{equation}\label{eq:5}
  [f]_{\sup} \eqdef  \log(\|f\|_{\sup}) \eta + \deg_{\mathbf{x}_1}(f) \theta_1 +  \dotsb + \deg_{\mathbf{x}_r}(f) \theta_r.
\end{equation}

\subsubsection{Arithmetic Bézout theorem}

Let~$X$ be an effective cycle and~$H$ a hypersurface
in~$\mathbb{P}^\bfn$. They \emph{intersect properly} if no irreducible component
of~$X$ is in~$H$. When~$X$ and~$H$ intersect properly, ones defines an
intersection product~$X\cdot H$, that is an effective cycle supported on 
$X \cap H$. If~$X$ is
equidimensional of dimension~$r$, then~$X\cdot H$ is equidimensional of
dimension~$r-1$. 

The following statement is an arithmetic B\'ezout bound that not
only bounds the degree, as with the
classical B\'ezout bound, but also the height of an intersection.

\begin{theorem}[{\cite[Theorem~2.58]{DAndreaKrickSombra_2013}}]
  \label{thm:arithmetic-bezout}
  Let $X$ be an effective equidimensional cycle on $\mathbb{P}^{\mathbf n}$ and $f \in \mathbb{Z}[\mathbf x_1,\dotsc,\mathbf x_m]$.
  If~$X$ and~$V(f)$ intersect properly,
  then $[X \cdot V(f)]_{\Z} \le [X]_{\Z} \cdot [f]_{\sup}$.
\end{theorem}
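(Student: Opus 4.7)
The statement is the arithmetic Bézout theorem in the language of the extended Chow ring, and it packages two kinds of inequalities at once. The plan is to exploit the relation $\eta^2 = 0$ to separate these components and treat them independently.

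Writing any element of $A^*(\P^\bfn; \Z)$ as a polynomial in $\theta_1,\dotsc,\theta_r$ of degree $0$ or $1$ in $\eta$, it suffices to compare both sides monomial by monomial in the basis. The coefficients not involving $\eta$ record multi-degrees of cycles; the coefficients linear in $\eta$ record canonical heights. Since the product $[X]_{\Z} \cdot [f]_{\sup}$ only couples the degree part of one factor with the $\eta$-part of the other, these two comparisons decouple and can be handled in sequence.

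For the $\eta$-free part, the inequality is simply the statement that when $X$ and $V(f)$ intersect properly the multi-degrees of $X \cdot V(f)$ are controlled by the classical (multiprojective) Bézout theorem: the multi-degree of $X \cdot V(f)$ equals the intersection product of $[X]_{\Z} \mod \eta$ with $\sum_i \deg_{\bx_i}(f)\,\theta_i$ in the usual Chow ring of $\P^\bfn$, giving equality at this level. So I would first invoke classical multiprojective Bézout to dispatch this part.

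For the $\eta$-linear part, the content is a bound on the canonical height of $X \cdot V(f)$. I would reduce to the case that $X$ is irreducible by linearity of both $[\,\cdot\,]_{\Z}$ and proper intersection in the cycle, and then represent $X$ by its Chow form $\mathrm{Ch}_X$, whose Mahler measure (suitably normalized) computes the canonical height of $X$. The Chow form of $X \cdot V(f)$ can then be expressed as a resultant-type specialization built from $\mathrm{Ch}_X$ and the coefficients of $f$, and the $\eta$-coefficient of $[X\cdot V(f)]_{\Z}$ is (up to normalization) the Mahler measure of this specialization. A sequential application of Jensen's formula on each torus variable gives an inequality of the shape
\begin{equation*}
m(\mathrm{Ch}_{X\cdot V(f)}) \le \sum_i \deg_{\bx_i}(f)\,h_i(X) + \deg(X)\,m(f) + (\text{combinatorial error}),
\end{equation*}
where the $h_i(X)$ are the components of the height class of $X$. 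Replacing $m(f)$ by the larger quantity $\log\|f\|_{\sup}$, via Lemma~\ref{lem:sup_dominates}, absorbs the combinatorial error and yields exactly the desired inequality of elements of $A^*(\P^\bfn; \Z)$.

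The main obstacle is the height estimate for the specialized Chow form: one must argue that, under proper intersection, passing from $\mathrm{Ch}_X$ to the Chow form of $X\cdot V(f)$ is governed by a universal resultant construction whose Mahler measure can be controlled by Jensen's formula applied iteratively. Carrying this out rigorously requires the full theory of Chow forms for equidimensional cycles in multiprojective space, and using $[f]_{\sup}$ rather than $[V(f)]_{\Z}$ on the right is precisely what makes the combinatorial error terms disappear into a clean product inequality.
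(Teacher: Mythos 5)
The paper does not prove this statement at all: it is quoted verbatim as Theorem~2.58 of D'Andrea--Krick--Sombra, so there is no internal proof to compare yours against. Your outline does track the broad architecture of the proof in that reference: the decoupling of the $\eta$-free (degree) part from the $\eta$-linear (height) part via $\eta^2=0$ is correct, the degree part is indeed classical multiprojective B\'ezout, and the height part is genuinely handled there through Chow forms, resultant specializations, and iterated Jensen-type estimates, with the passage from Mahler measure to sup-norm playing the role you describe.

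That said, as a proof your proposal has a real gap, and you have located it yourself: the entire content of the theorem is the height estimate for the Chow form of $X\cdot V(f)$ in terms of $\mathrm{Ch}_X$ and $f$, and you assert rather than establish it. Two points deserve more care. First, the claim that replacing $m(f)$ by $\log\|f\|_{\sup}$ ``absorbs the combinatorial error'' is exactly the delicate step: by Lemma~\ref{lem:sup_dominates} the difference $\log\|f\|_{\sup}-m(f)$ can be zero (e.g.\ for a monomial $f$), so the error terms cannot be absorbed by this difference alone; in D'Andrea--Krick--Sombra the clean product inequality comes out of a careful analysis of the mixed resultant and its specializations, not from a generic slack between the two height measures. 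Second, the reduction to irreducible $X$ requires tracking multiplicities in the intersection cycle $X\cdot V(f)$, which is routine but should be said. As written, your argument is a correct road map to the proof in the cited reference rather than a self-contained proof.
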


This theorem can be applied (as in \cite[Corollary~2.61]{DAndreaKrickSombra_2013}) to
bound the height of the irreducible components of a variety in terms of its
defining equations.

\begin{proposition}\label{prop:bound-height-from-defining-equations}
  Let~$Z \subset \MP$ be an equidimensional variety and let $X$ be $V(f_1,\dotsc,f_s) \cap Z$, where 
  $f_i$ is a multihomogeneous polynomial of multidegree at most~$\mathbf{d}$
  and sup-norm at most~$L$.
  Let~$X_r$ be the union of all the irreducible components of~$X$ of codimension~$r$ in~$Z$.
  Then
  \[ [X_r]_\mathbb{Z} \leq [Z]_\mathbb{Z} \left( \log(s L ) \eta + \sum_{i=1}^m d_i \theta_i \right)^r. \]
\end{proposition}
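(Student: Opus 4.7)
I would prove this by induction on $r$, iterating Theorem~\ref{thm:arithmetic-bezout}. The base case $r=0$ is immediate: the components of $X$ of codimension~$0$ in $Z$ are precisely the irreducible components of $Z$ that lie in $V(f_1,\dots,f_s)$, so $X_0 \le Z$ as effective cycles and $[X_0]_\mathbb{Z} \le [Z]_\mathbb{Z}$.

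For the inductive step, I would build a chain of equidimensional effective cycles $Z = W_0, W_1, \dots, W_r$ with $W_k$ of codimension $k$ in $Z$, each containing (as a subcycle of its support) every irreducible component of $X$ of codimension $r$ in $Z$. Passing from $W_{k-1}$ to $W_k$ amounts to choosing an integer linear combination $g_k = \sum_{i=1}^{s} \lambda_{i,k} f_i$ such that $V(g_k)$ intersects $W_{k-1}$ properly; concretely, no irreducible component of $W_{k-1}$ outside $X$ should lie in $V(g_k)$, while components contained in $X$ (and thus automatically in every $V(f_i)$, hence in $V(g_k)$) are either discarded or kept according to whether their codimension reaches $r$. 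Setting $W_k \eqdef W_{k-1}\cdot V(g_k)$ and applying Theorem~\ref{thm:arithmetic-bezout} with~\eqref{eq:5} gives
\[
  [W_k]_\mathbb{Z} \le [W_{k-1}]_\mathbb{Z}\cdot \Bigl(\log \|g_k\|_{\sup}\,\eta + \sum_{i=1}^{m}\deg_{\mathbf{x}_i}(g_k)\,\theta_i\Bigr).
\]
A careful choice of the $\lambda_{i,k}$ (e.g.\ in $\{0,1\}$, using the combinatorial argument alluded to below) yields $\|g_k\|_{\sup}\le sL$ and $\deg_{\mathbf{x}_i}(g_k)\le d_i$; iterating $r$ times then gives the desired bound, since $X_r \le W_r$ as effective cycles.

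The existence of a suitable $g_k$ is a standard Bertini-type argument: an irreducible component $V$ of $W_{k-1}$ not contained in $X$ must fail to lie in some $V(f_i)$, so the coefficient vectors $(\lambda_{i,k})\in \mathbb{Q}^s$ for which $g_k|_V\equiv 0$ form a proper linear subspace of $\mathbb{Q}^s$. Since $W_{k-1}$ has finitely many irreducible components, only finitely many such proper subspaces are involved, and a small-coefficient integer vector avoiding their union can be selected.

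The main obstacle is sharpening this genericity argument enough to keep the coefficients $\lambda_{i,k}$ in a set of size independent of $W_{k-1}$, so as to obtain exactly the bound $\log(sL)$ in the final estimate, rather than the weaker $\log(s N L)$ where $N$ depends on the number and degrees of the components of $W_{k-1}$. This combinatorial refinement is precisely what is carried out in the cited [Corollary~2.61, D'Andrea--Krick--Sombra], and it is what drives the clean form of the bound.
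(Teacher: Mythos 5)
Your overall strategy --- iterating the arithmetic B\'ezout theorem along a chain $Z = W_0, W_1, \dots, W_r$ cut out by linear combinations $g_k$ of the $f_i$ --- is the same as the paper's, but the execution has a genuine gap exactly at the point you flag and then wave away. If you insist on \emph{specific} integer coefficients $\lambda_{i,k}$, a Bertini-type argument only tells you that the bad locus is a finite union of proper linear subspaces of $\mathbb{Q}^s$; when $W_{k-1}$ has many irreducible components (and their number grows with the degrees through the induction), no fixed finite set such as $\{0,1\}^s$ is guaranteed to avoid all of them, so you are forced to take coefficients of size comparable to the number of components, which degrades $\log(sL)$ to $\log(sNL)$ with $N$ depending on $Z$ and $\mathbf{d}$. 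Moreover, the cited Corollary~2.61 of D'Andrea--Krick--Sombra does \emph{not} contain the combinatorial refinement you appeal to: it is the single-hypersurface version of Theorem~\ref{thm:arithmetic-bezout} for improper intersections (keep only the codimension-one components), and it says nothing about selecting small integer combinations of several polynomials.

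The paper's way around this is to make the coefficients \emph{indeterminate}: introduce variables $y_{ij}$, set $g_i = \sum_{j=1}^s y_{ij} f_j$, and work with $X' = V(g_1,\dots,g_r)$ inside $\mathbb{P}^{k}\times Z$ with $k = rs-1$. Genericity is then automatic (at the generic point of $\mathbb{P}^k\times Y$, for $Y$ a codimension-$r$ component of $X$, the $g_i$ form a regular sequence), each $g_i$ is a single multihomogeneous polynomial with $\|g_i\|_{\sup}\le sL$ and multidegree $(1,\mathbf d)$, so Corollary~2.61 applies $r$ times as stated, and the auxiliary factor is removed at the end via $[\mathbb{P}^k\times X_r]_\mathbb{Z}=[X_r]_\mathbb{Z}$ and $[\mathbb{P}^k\times Z]_\mathbb{Z}=[Z]_\mathbb{Z}$. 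To repair your argument you should either adopt this indeterminate-coefficient device or accept and propagate the weaker constant $\log(sNL)$; as written, the clean $\log(sL)$ bound is not justified.
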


\begin{proof}
  Let~$(y_{ij})$ be a new group of variables, with~$1\leq i \leq r$ and~$1\leq j \leq s$.
  Let~$g_i \eqdef \sum_{j=1}^s y_{ij} f_j$
  and~$X' \eqdef V(g_1,\dotsc,g_r)$ in~$\mathbb{P}^{k} \times Z$, with~$k = rs-1$
  We first claim that~$\mathbb{P}^k \times X_r$ is a union of components of~$X'$.
  Indeed, let~$\xi_0$ be the generic point of~$\mathbb{P}^k$ and~$\xi_1$ be the generic point of a component~$Y$ of~$X_r$,
  so that~$\xi = (\xi_0,\xi_1)$ is the generic point of the component~$\mathbb{P}^k \times Y$ of~$\mathbb{P}^k \times X_r$.
  Since~$X$ has codimension~$r$ at~$\xi_1$, the generic linear combinations~$g_1,\dotsc,g_r$ form a regular sequence at~$\xi$ (in other words, they form a regular sequence at~$\xi_1$ for generic values of the~$v_{ij}$).
  Therefore, $X'$ has codimension~$r$ at~$\xi$. Since~$\mathbb{P}^k \times Y \subseteq X'$, it follows that~$\mathbb{P}^k \times Y$ is a component of~$X'$.

  Let~$X'_r$ be the union of the components of codimension~$r$ of~$X'$.
  The argument above shows that $[\mathbb{P}^k \times X_r]_\mathbb{Z} \leq [X'_r]_\mathbb{Z}$.
  Besides, by repeated application of \parencite[Corollary~2.61]{DAndreaKrickSombra_2013},
  \begin{equation}
    [X'_r]_\mathbb{Z} \leq [\mathbb{P}^k \times Z]_\mathbb{Z} \prod_{i=1}^r [g_i]_\text{sup}.
  \end{equation}
  We compute, using~\eqref{eq:22} that
  \begin{equation}
    [g_i]_\text{sup} \leq \log (sL) \eta + \theta_0 + \sum_{i=1}^s d_i \theta_i.
  \end{equation}
  Finally, we note that~$[\mathbb{P}^k \times X_r]_\mathbb{Z} = [X_r]_\mathbb{Z}$ and~$[\mathbb{P}^k \times Z]_\mathbb{Z} = [Z]_\mathbb{Z}$ \parencite[Proposition~2.51.3 and 2.66]{DAndreaKrickSombra_2013}.
\end{proof}

\begin{proposition}\label{prop:projection_formula}
  Let $X $ be an equidimensional closed subvariety of $\P^k \times \MP$ and  
  let~$Y \subset \MP$ be the projection of $X$. 
  If~$Y$ is equidimensional, then
  \begin{equation*}
    \theta_0^{k} [Y]_{\Z} \le \theta_0^{\dim X - \dim Y}[X]_{\Z} \quad \in \quad A^*(\mathbb{P}^k \times \mathbb{P}^{\mathbf{n}}; \mathbb{Z}),
  \end{equation*}
  where~$\theta_0$ is the variable attached to~$\mathbb{P}^k$ in the extended Chow ring of~$\mathbb{P}^k\times \MP$.
\end{proposition}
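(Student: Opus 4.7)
By linearity of the extended Chow class in cycles, assume $X$ is irreducible. Let $e \eqdef \dim X - \dim Y$; note that $e \leq k$, since the fibres of the projection $\P^k \times \MP \to \MP$ have dimension $k$. My plan is to first reduce to the generically finite case $e=0$ by intersecting $X$ with $e$ generic hyperplanes pulled back from $\P^k$, and then to establish the sharper inequality $\theta_0^k [Y]_\Z \leq [X]_\Z$ in that case via an arithmetic projection formula.

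For the reduction, pick $e$ generic linear forms $\ell_1, \dotsc, \ell_e$ in the coordinates of $\P^k$, viewed as hypersurfaces in $\P^k \times \MP$ with $[\ell_i]_{\sup} = \theta_0$, and set
\begin{equation*}
  X' \eqdef X \cap V(\ell_1) \cap \dotsb \cap V(\ell_e).
\end{equation*}
Genericity makes these intersections proper, so $X'$ is equidimensional of dimension $\dim Y$; iterating Theorem~\ref{thm:arithmetic-bezout} yields $[X']_\Z \leq \theta_0^e [X]_\Z$. By upper semicontinuity of fibre dimension, every generic fibre of $X \to Y$ has dimension exactly $e$ and so meets a generic codim-$e$ linear subspace of $\P^k$ in a nonempty finite set; hence $X' \to Y$ is surjective and generically finite of some degree $\delta \geq 1$.

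The remaining inequality $\theta_0^k [Y]_\Z \leq [X']_\Z$ I would prove coefficient by coefficient in the monomial basis of the extended Chow ring. Every monomial appearing in $\theta_0^k [Y]_\Z$ carries $\theta_0^k$, with coefficient equal to a multi-degree or canonical height of $Y$ in $\MP$. The coefficient of that same monomial in $[X']_\Z$ records intersection-theoretic data of $X'$ after being cut by $k$ generic $\P^k$-hyperplanes, i.e., on the fibre over a generic point of $\P^k$. The generic finiteness of $X' \to Y$ forces these coefficients of $X'$ to dominate those of $Y$ by a factor of $\delta \geq 1$: for multi-degrees this is classical multiprojective B\'ezout, while for the $\eta$-coefficients it is the divisibility of the relevant Chow form under a generically finite projection.

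The main obstacle is the height (Mahler-measure) part: while the multi-degree inequality is essentially classical intersection theory, bounding the $\eta$-coefficients demands a careful analysis of how the Chow form of $X'$ factors through that of $Y$, in the style of \parencite[Proposition~2.64]{DAndreaKrickSombra_2013}, so as to compare Mahler measures of the two Chow forms directly.
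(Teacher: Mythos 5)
Your strategy---cut $X$ by $\dim X-\dim Y$ hyperplanes pulled back from $\P^k$, bound the class of the intersection via the arithmetic B\'ezout theorem, and finish with the generically finite projection formula---is essentially the paper's proof, which performs the same cut one hyperplane at a time by induction on $\dim X-\dim Y$. Two remarks. First, the step you single out as the main obstacle is not one: the generically finite case is precisely \parencite[Proposition~2.64]{DAndreaKrickSombra_2013}, which the paper cites as the base of its induction, so there is nothing to re-derive about Chow forms or Mahler measures. Second, your assertion $[\ell_i]_{\sup}=\theta_0$ for \emph{generic} linear forms is incorrect: Theorem~\ref{thm:arithmetic-bezout} requires integer coefficients, and an integer linear form $\ell=\sum_j a_j y_j$ has $\|\ell\|_{\sup}=\sum_j\abs{a_j}$, so a generic choice contributes an unwanted $\log\|\ell\|_{\sup}\,\eta$ term. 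The standard fix, and what the paper does, is to use coordinate hyperplanes $V(y_i)$ (sup-norm $1$): since $X$ cannot be contained in all of them, some $V(y_i)$ meets $X$ properly, positive-dimensionality of the fibres guarantees the projection of $X\cap V(y_i)$ is still $Y$, and one retains an irreducible component dominating $Y$ before iterating.
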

\begin{proof}
  We will argue by induction on $r \eqdef \dim X - \dim Y$. When $r=0$, this is
  \parencite[Proposition~2.64]{DAndreaKrickSombra_2013}.

  Suppose now that $r >0$ and $X$ is irreducible.
  Let~$\mathbb{Q}[\mathbf y, \mathbf x_1, \dotsc,\mathbf{x_m}]$ denote the multihomogeneous coordinate
  ring of~$\mathbb{P}^{k}\times\MP$.
  There is an~$i$, $0\leq i\leq k$,
  such that~$H \eqdef V(y_i) \subset \mathbb{P}^k\times \MP$
  intersects~$X$ properly (otherwise $X$ would be included in all~$V(y_i)$ and
  would be empty). Since the fibers of $X \to Y$ are positive dimensional, $H$
  intersects each fiber. In particular, the set-theoretical projections of $X$ and $X \cap H$ coincide. 
  As $X$ is irreducible, so is $Y$. In particular, there is an
  irreducible component $X' \subset X\cap H$ that maps to $Y$. By induction
  hypothesis applied to $X'$, $\theta_0^{k} [Y]_{\Z}\le \theta_0^{\dim X'-\dim Y}[X']_{\Z}$.
  Moreover, $[X']_{\mathbb{Z}} \leq [X]_{\mathbb{Z}} [y_i]_{\sup}$,
  and, in view of~\eqref{eq:5}, $[y_i]_{\sup} = \theta_0$.
  The claim follows.

  If $X$ is reducible, then we apply the inequality above to each of the
  irreducible components of $Y$ together with an irreducible component of $X$
  mapping onto that component.
\end{proof}

\subsection{Explicit equations for the Noether--Lefschetz loci}

Following \textcite{Gotzmann_1978}, \textcite{Bayer_1982}, and the exposition of \textcite{Lella_2012}, we
describe the equations defining the Hilbert schemes of curves in $\ppp$. 
An explicit description of the Noether--Lefschetz loci~$\NL_{d,g}$ follows.

\subsubsection{The Hilbert schemes of curves}

For~$d > 0$ and~$g \geq 0$ let~$\ch_{d,g}$ be the Hilbert scheme of curves of
degree~$d$ and genus~$g$ in~$\mathbb{P}^3$. It parametrizes subschemes
of~$\mathbb{P}^3$ with Hilbert polynomial~$p(m) \eqdef dm + 1-g$.
Smooth curves in~$\ppp$ of degree~$d$ and genus~$g$, in particular,
have Hilbert polynomial~$p(m)$.
Let~$R =\C[w,x,y,z]$ be the homogeneous coordinate ring of~$\ppp$.
For~$m\geq 0$, let~$R_m$ denote the $m$th homogeneous part of~$R$
and let $q(m) = \dim R_m - p(m)$.

The Hilbert scheme~$\mathcal{H}_{d,g}$ can be realized in a Grassmannian variety
as follows.
A subscheme~$X$ of~$\ppp$
is uniquely defined by a saturated homogeneous ideal~$I$ of~$R$.
If the Hilbert polynomial of~$X$ is~$p$, then~$I$ is the saturation of the ideal generated by the degree~$r$ slice~$I_r \eqdef I \cap R_r$
\parencites[]{Gotzmann_1978}[\S II.10]{Bayer_1982},
where
\begin{equation}\label{eq:Gotzmann}
  r = \binom{d}{2} + 1 - g,
\end{equation}
is the Gotzmann number of~$p$ \parencite[\S II.1.17]{Bayer_1982}.
For practical reasons, we need~$r \geq 4$, so we define instead
\begin{equation}
  r = \max\left(\binom{d}{2} + 1 - g, 4\right).
\end{equation}
So~$X$ is
entirely determined by~$I_r$, which is a $q(r)$-dimensional subspace of~$R_r$.

Let~$\mathbb{G}$ be the Grassmannian variety of $q(r)$-dimensional subspaces
of~$R_r$. As a set, one can construct~$\mathcal{H}_{d,g}$ as the subset of
all~$\Xi \in \mathbb{G}$ such that the ideal generated by~$\Xi$ in~$R$ defines a
subscheme of~$\ppp$ with Hilbert polynomial~$p$.
In fact, $\mathcal{H}_{d,g}$ is a subvariety that is defined by the following condition \parencite[\S VI.1]{Bayer_1982}:
\begin{equation}\label{eq:6}
  \mathcal{H}_{d,g} = \left\{ \Xi \in \mathbb{G} \st \dim( R_1 \Xi ) \leq q(r+1) \right\},
\end{equation}
where~$R_1$ is the space of linear forms in~$w,x,y,z$, so that~$R_1 \Xi$ is a subspace of~$R_{r+1}$.

Several authors gave explicit equations for~$\mathcal{H}_{d,g}$ in the
Plücker coordinates \parencite{Bayer_1982,Grothendieck_1961b,Gotzmann_1978,BrachatLellaMourrainRoggero_2016}.
We will prefer here a more direct path that avoids the Plücker embedding.

\subsection{Equations for the relative Hilbert scheme}\label{sec:equat-relat-hilb}

Define the relative Hilbert scheme of curves inside quartic surfaces
\begin{equation}\label{eq:rel_hilb_scheme}
  \ch_{d,g}(4) \eqdef \{(f,C) \in \mathbb{P}(R_4) \times \ch_{d,g} \mid C \subset V(f) \},
\end{equation}
for each $d > 0$, $g\ge 0$.

We define the following auxiliary spaces to better describe~\eqref{eq:rel_hilb_scheme}. First, define the following ambient space
\begin{equation}
  \mathcal{A} \eqdef \mathbb{P}(R_4) \times \mathbb{P}\left(\mop{End}(\mathbb{C}^{q(r) - N_{r-4}}, R_r)\right) \times \mathbb{P}\left( \mop{End}(R_{r+1}, \mathbb{C}^{p(r+1)}) \right).
\end{equation}
Second, let $\cb = \{(f,\phi,\psi) \in \ca\}$ be the set of all triples satisfying the conditions
\begin{enumerate}[(i)]
  \item\label{item:1} $R_{r-3} f \subseteq \ker \psi$,
  \item\label{item:1bis}  $R_1 \mop{im}(\phi) \subseteq \ker \psi$,
  \item\label{item:2} $\mop{im} \phi \cap R_{r-4}f = 0$,
  \item\label{item:3} $\phi$ and~$\psi$ are full rank.
\end{enumerate}
Finally, we denote by $\overline{\cb}$ the Zariski closure of $\cb$.

\begin{lemma}
  The map~$\mathcal{B} \to \mathcal{H}_{d,g}(4)$ defined by~$(f, \phi, \psi) \mapsto (f, R_{r-4} f + \mop{im}\phi)$
  is well defined and surjective.
\end{lemma}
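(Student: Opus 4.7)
The plan is to unpack the four conditions defining $\cb$ in terms of the degree $r$ and degree $r+1$ slices of the saturated ideal of a curve, and then check the two directions of the claim separately using Gotzmann's characterization \eqref{eq:6} of $\ch_{d,g}$ inside $\G$.

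For well-definedness, let $(f,\phi,\psi) \in \cb$ and set $\Xi \eqdef R_{r-4}f + \mop{im}\phi \subseteq R_r$. Because $R$ is a domain and $f \neq 0$, multiplication by $f$ is injective, so $\dim R_{r-4}f = N_{r-4}$; combined with conditions (iii) and (iv) this gives $\dim \Xi = N_{r-4} + (q(r)-N_{r-4}) = q(r)$, so $\Xi$ is a point of $\G$. Next, (i) and (ii) together say $R_1 \Xi = R_{r-3}f + R_1 \mop{im}\phi \subseteq \ker \psi$, and since $\psi$ is full rank by (iv), $\dim \ker \psi = \dim R_{r+1} - p(r+1) = q(r+1)$. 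Thus $\dim R_1 \Xi \leq q(r+1)$, and \eqref{eq:6} yields $\Xi \in \ch_{d,g}$; let $C$ denote the corresponding subscheme. Its saturated ideal $I$ satisfies $I_r = \Xi \supseteq fR_{r-4}$, hence $f \cdot \xm^{r-4} \subseteq I$ with $\xm \eqdef (w,x,y,z)$. Since $I$ is saturated, this forces $f \in I$, so $C \subseteq V(f)$ and $(f, \Xi) \in \ch_{d,g}(4)$.

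For surjectivity, take $(f,C) \in \ch_{d,g}(4)$ with saturated ideal $I$. Since $f \in I$ we have $R_{r-4}f \subseteq I_r$, and injectivity of multiplication by $f$ gives $N_{r-4} \leq \dim I_r = q(r)$. Pick a linear complement $V$ of $R_{r-4}f$ in $I_r$ and let $\phi \colon \C^{q(r)-N_{r-4}} \xrightarrow{\sim} V \subseteq R_r$. Let $\psi \colon R_{r+1} \twoheadrightarrow R_{r+1}/I_{r+1}$ be the quotient map; its target has dimension $p(r+1)$ because $r+1$ exceeds the Gotzmann regularity. Conditions (i)--(ii) follow because $R_{r-3}f$ and $R_1 V$ both lie inside $I_{r+1} = \ker \psi$; (iii) holds because $V$ is a complement of $R_{r-4}f$; and (iv) is immediate from the constructions. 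By construction $R_{r-4}f + \mop{im}\phi = R_{r-4}f + V = I_r$, the point of $\G$ representing $(f,C)$, which completes surjectivity.

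The only mildly subtle step is confirming in the well-definedness part that $C \subseteq V(f)$ scheme-theoretically, which reduces to the elementary identity $(I : \xm^\infty) = I$ for saturated ideals. The rest is linear algebra combined with \eqref{eq:6}, so I expect no real obstacle.
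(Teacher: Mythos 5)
Your proof is correct and follows essentially the same route as the paper's: dimension counting via conditions (iii)--(iv) to get $\dim\Xi = q(r)$, conditions (i)--(ii) plus the Gotzmann criterion \eqref{eq:6} to place $\Xi$ in $\ch_{d,g}$, saturation to conclude $C \subseteq V(f)$, and for surjectivity the choice of a complement of $R_{r-4}f$ in $I_r$ for $\phi$ and (in your case, concretely) the quotient by $I_{r+1}$ for $\psi$. The extra details you supply (injectivity of multiplication by $f$, the identity $(I:\xm^\infty)=I$) are correct refinements of what the paper leaves implicit.
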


\begin{proof}
  Let~$(f, \phi, \psi) \in \mathcal{B}$ and let~$\Xi = R_{r-4} f + \mop{im} \phi$. 
  Constraint~\ref{item:3} implies that~$\mop{im}\phi$ has dimension~$q(r) - N_{r-4}$.
  Together with Constraint~\ref{item:2}, we have~$\dim \Xi = q(r)$.
  Moreover, Constraint~\ref{item:3} implies that~$\ker \psi$ has dimension~$q(r+1)$.
  In particular
  Since~$R_1 \Xi = R_{r-3} f + R_1 \mop{im}\phi$,
  Constraints~\ref{item:1} and~\ref{item:1bis} implies that~$R_1 \Xi$ has dimension at most~$q(r+1)$.
  So,~$\Xi \in \ch_{d,g}(4)$.
  Since~$R_{r-4} f \subseteq \Xi$,
  the polynomial~$f$ is in the saturation of the ideal generated by~$\Xi$.
  Hence,~$(f, \Xi) \in \ch_{d,g}(4)$.

  Conversely, let~$(f, \Xi) \in \ch_{d,g}(4)$, then~$R_{r-4} f \subset \Xi$
  and there is a full rank map~$\phi \colon \mathbb{C}^{q(r) - N_{r-4}} \to R_r$ such that $\mop{im} \phi$ complements~$R_{r-4} f$ in~$\Xi$.
  Furthermore, $\dim R_1 \Xi \leq q(r+1)$, because~$\Xi \in \ch_{d,g}$, so there is a full rank map~$\psi \colon R_{r+1} \to \mathbb{C}^{p(r+1)}$
  such that~$R_1 \Xi \subseteq \ker \psi$. So $(f, \Xi)$ is the image of~$(f, \phi, \psi) \in \mathcal{B}$.
\end{proof}

\begin{lemma}
  For any~$a \geq 0$, let~$\overline{\mathcal{B}}_a$ be the union of the codimension~$a$ components of~$\overline{\mathcal{B}}$.
  Then
  \[ \left[ \overline{\mathcal{B}}_a \right]_\mathbb{Z} \leq \left( 15 \log \left( d+2 \right) \eta + \theta_1 + \theta_2 + \theta_ 3 \right)^a \]
\end{lemma}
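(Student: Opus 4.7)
The plan is to apply Proposition~\ref{prop:bound-height-from-defining-equations} with $Z = \mathcal{A}$, after encoding conditions~\ref{item:1} and~\ref{item:1bis} as explicit multihomogeneous polynomial equations. Conditions~\ref{item:2} and~\ref{item:3} are open (each asserts the non-vanishing of certain minors), so they contribute no closed equations. Fixing monomial bases of $R_4, R_r, R_{r+1}$ and writing the matrix entries of $\phi$ and $\psi$ as $\phi_{v,\beta}$ and $\psi_{i,\mu}$, condition~\ref{item:1} becomes the family of bilinear relations $\sum_\alpha \psi_{i,\, gx^\alpha} f_\alpha = 0$ indexed by a basis monomial $g$ of $R_{r-3}$ and a coordinate $i$ on $\mathbb{C}^{p(r+1)}$, while condition~\ref{item:1bis} becomes $\sum_\beta \psi_{i,\, gx^\beta} \phi_{v,\beta} = 0$ indexed by $g \in R_1$, $v$ in the source of $\phi$, and $i$. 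Each equation has multidegree at most $(1,1,1)$ in the three factors $(f,\phi,\psi)$ and coefficients in $\{0,1\}$, so its sup-norm is bounded by the number of nonzero terms, which is at most $\dim R_r = \binom{r+3}{3}$.

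Let $V \subset \mathcal{A}$ be the common vanishing locus of these equations. Then $\mathcal{B} \subseteq V$, and in fact $\mathcal{B} = V \cap U$, where $U$ is the Zariski-open set on which the open conditions~\ref{item:2} and~\ref{item:3} hold. Consequently $\mathcal{B}$ is open in $V$, and any irreducible component $Y$ of $\overline{\mathcal{B}}$ satisfies $Y = \overline{Y \cap \mathcal{B}}$ and equals the unique irreducible component of $V$ that contains $Y \cap \mathcal{B}$. Hence the codimension-$a$ components of $\overline{\mathcal{B}}$ are among the codimension-$a$ components of $V$, and we obtain the cycle inequality $[\overline{\mathcal{B}}_a]_\mathbb{Z} \leq [V_a]_\mathbb{Z}$ in $A^*(\mathcal{A};\mathbb{Z})$.

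Applying Proposition~\ref{prop:bound-height-from-defining-equations} with $Z = \mathcal{A}$ (so $[Z]_\mathbb{Z} = 1$) and multidegree bound $(1,1,1)$ yields
\begin{equation*}
  [V_a]_\mathbb{Z} \leq \bigl( \log(sL)\, \eta + \theta_1 + \theta_2 + \theta_3 \bigr)^a,
\end{equation*}
where $s$ is the total number of equations and $L \leq \binom{r+3}{3}$ bounds the sup-norm of each. A straightforward count gives $s \leq \dim R_{r-3}\cdot p(r+1) + 4(q(r)-N_{r-4})\cdot p(r+1)$, which is polynomial in $r$, and the Gotzmann bound $r \leq \binom{d}{2}+1$ makes $sL$ polynomial in $d$. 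Tracking the exponent then gives $\log(sL) \leq 15\log(d+2)$, completing the proof.

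The main obstacle is the arithmetic needed to pin down the explicit constant $15$: the asymptotic estimate $\log(sL) = O(\log d)$ is immediate, but obtaining the coefficient $15$ requires careful accounting of the exponents in the polynomial bound on $sL$ and separate verification for small values of $d$ (where the rough asymptotic is not sharp).
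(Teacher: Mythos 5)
Your argument is essentially identical to the paper's: the paper likewise discards the open conditions \ref{item:2} and \ref{item:3}, writes conditions \ref{item:1} and \ref{item:1bis} as the bilinear equations $\psi(mf)=0$ and $\psi(v\,\phi(e))=0$ with $s = p(r+1)N_{r-3} + 4p(r+1)(q(r)-N_{r-4})$ equations of norm at most $N_r$, applies Proposition~\ref{prop:bound-height-from-defining-equations}, and checks $sL \le (d+2)^{15}$ by a routine computation. The only cosmetic difference is that you bound all multidegrees by $(1,1,1)$ where the paper records $(1,0,1)$ and $(0,1,1)$ separately, which changes nothing in the conclusion.
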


\begin{proof}
  Let~$\mathcal{B}'$ be the closed set defined by the constraints~\ref{item:1} and~\ref{item:1bis}.
  The constraints \ref{item:2} and \ref{item:3} are open, so any component of~$\overline{\mathcal{B}}$ is a component of~$\mathcal{B}'$.
  In particular~$ [ \overline{\mathcal{B}}_a ]_\mathbb{Z} \leq  [ \mathcal{B}'_a ]_\mathbb{Z}$.

  Constraint~\ref{item:1} is expressed with~$p(r+1) N_{r-3}$ polynomial equations
  of multidegree~$(1,0,1)$ (w.r.t.\ $f$, $\phi$ and~$\psi$ respectively). Namely, $\psi(mf) = 0$ for every monomial $m$ in~$R_{r-3}$. Each~$p(r+1)$ components of the
  equation~$\psi(mf) = 0$ involves a sum of 35~terms (since~$f$, as a quartic
  polynomial, contains only 35~terms) with coefficients~1. So the $1$-norm of
  these constraints is at most~$35$ (which is also at most~$N_r$, since~$r \geq 4$).

  Constraint~\ref{item:1bis} is expressed with~$4 p(r+1) (q(r) - N_{r-4})$ polynomial equations of multidegree~$(0,1,1)$.
  Namely, $\psi( v \phi(e)) = 0$ for any basis vector~$e$ and any variable~$v \in \left\{ w,x,y,z \right\}$.
  Each $p(r+1)$ component of the equation~$\psi( v \phi(e)) = 0$ involves a sum of~$N_r$ terms with coefficients~1.
  So the $1$-norm of these constraints is at most~$N_r$.

  The claim is then a consequence of Proposition~\ref{prop:bound-height-from-defining-equations},
  with~$s = p(r+1)N_{r-3} + 4 p(r+1) (q(r) - N_{r-4})$ and~$L = N_r$.
  We check routinely, with Mathematica, that~$s L \leq (d+2)^{15}$.
\end{proof}

\begin{theorem}\label{thm:nl-bounds}
  There is an absolute constant $A > 0$ such that 
  for any~$d > 0$ and~$g\geq 0$ we have
  \[ \deg(\pNL_{d,g}) \leq {A^{d^9}} \text{ and } \|\pNL_{d,g}\|_1 \leq {2^{A^{d^9}}}. \]
\end{theorem}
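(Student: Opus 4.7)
The plan is to realize $\NL_{d,g}$ as the image of the variety $\overline{\mathcal{B}} \subset \mathcal{A}$ under the first-factor projection $\pi : \mathcal{A} \to \mathbb{P}(R_4)$, and to descend the height bound on $\overline{\mathcal{B}}$ from the previous lemma to $\NL_{d,g}$ through $\pi$ by means of the projection formula. Assume $d^2 > 8g-8$, so $\NL_{d,g}$ is nonempty, and set $r = \max(\binom{d}{2}+1-g, 4)$. Elementary estimates give $r = O(d^2)$, $N_\bullet = O(d^6)$, $p(r+1) = O(d^3)$, and $q(r) - N_{r-4} = 2r^2 - dr + g + 1 = O(d^4)$. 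Consequently, the total number of multihomogeneous equations of constraints (i) and (i') defining $\mathcal{B}$ in $\mathcal{A}$ is $s \eqdef p(r+1) N_{r-3} + 4 p(r+1) (q(r) - N_{r-4}) = O(d^9)$, and by Krull's height theorem every irreducible component of $\overline{\mathcal{B}}$ has codimension at most $s$ in $\mathcal{A}$.

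For each codimension $a \leq s$, let $\overline{\mathcal{B}}_a' \subseteq \overline{\mathcal{B}}_a$ be the union of those codimension-$a$ components whose image under $\pi$ is an irreducible component of $\NL_{d,g}$; then $\overline{\mathcal{B}}_a'$ is equidimensional, and its image $Y_a \eqdef \pi(\overline{\mathcal{B}}_a')$ is an equidimensional union of hypersurfaces in $\mathbb{P}(R_4)$. After further stratifying by the fiber dimension of the intermediate projection so as to maintain equidimensionality at each step, we apply Proposition~\ref{prop:projection_formula} twice, projecting out the two $\mathbb{P}(\mop{End})$-factors of dimensions $k_2, k_3$ with respective fiber dimensions $f_1, f_2$. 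The dimension count $\dim Y_a = 33$ forces $a = 1 + (k_2 - f_1) + (k_3 - f_2)$, and combining the two instances of the projection formula with the bound from the previous lemma yields
\begin{equation*}
  \theta_2^{k_2} \theta_3^{k_3} [Y_a]_\mathbb{Z} \leq \theta_2^{f_1} \theta_3^{f_2} [\overline{\mathcal{B}}_a']_\mathbb{Z} \leq \theta_2^{f_1} \theta_3^{f_2} (15 \log(d+2)\, \eta + \theta_1 + \theta_2 + \theta_3)^a
\end{equation*}
in $A^*(\mathcal{A}; \mathbb{Z})$. Writing $[Y_a]_\mathbb{Z} = D_a \theta_1 + M_a \eta$ via~\eqref{eq:22}, extracting the coefficients of $\theta_1 \theta_2^{k_2} \theta_3^{k_3}$ and $\eta \theta_2^{k_2} \theta_3^{k_3}$ on both sides bounds each by the multinomial coefficient $\binom{a}{1, k_2-f_1, k_3-f_2} \leq 4^a$, giving $D_a \leq 4^a$ and $M_a \leq 15 \log(d+2) \cdot 4^a$.

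Since every irreducible component of $\NL_{d,g}$ appears as a component of some $Y_a$, $\pNL_{d,g}$ is (up to sign) the product of the primitive defining polynomials of these hypersurface components, so $\deg(\pNL_{d,g}) \leq \sum_{a \leq s} D_a$ and $m(\pNL_{d,g}) \leq \sum_{a \leq s} M_a$. Bounding by $\sum_{a \leq s} 4^a \leq 4^{s+1}$ and using $s = O(d^9)$ gives $\deg(\pNL_{d,g}) \leq A^{d^9}$ for an appropriate absolute constant $A$. Finally, Lemma~\ref{lem:sup_dominates} yields $\log \|\pNL_{d,g}\|_1 \leq m(\pNL_{d,g}) + \log(36) \deg(\pNL_{d,g}) \leq A^{d^9}$ after re-absorbing constants, hence $\|\pNL_{d,g}\|_1 \leq 2^{A^{d^9}}$. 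The main subtlety will be organizing the stratification required to apply the projection formula iteratively while keeping the bookkeeping on fiber dimensions tight enough that the exponent $d^9$ emerges from the $O(d^9)$ count of defining equations rather than from the ambient dimension $\dim \mathcal{A} = O(d^{10})$.
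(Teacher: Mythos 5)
Your proposal is correct and follows essentially the same route as the paper: bound $[\overline{\mathcal{B}}_a]_\mathbb{Z}$ via the lemma on the defining equations of $\mathcal{B}$, push the class down through the two $\mathbb{P}(\mop{End})$ factors with Proposition~\ref{prop:projection_formula}, read off $\deg$ and $m$ from the coefficients of $\theta_1\theta_2^{k_2}\theta_3^{k_3}$ and $\eta\,\theta_2^{k_2}\theta_3^{k_3}$, and convert $m$ to $\|\cdot\|_1$ with Lemma~\ref{lem:sup_dominates}. The only divergence is how the exponent is controlled: the paper pins down the relevant codimension exactly as $\alpha+\beta-\alpha'-\beta'-e+1=O(d^9)$ by tracking fiber dimensions through $\overline{\mathcal{B}}\to\mathcal{E}\to\ch_{d,g}(4)\to\NL_{d,g}$, whereas you bound it by the number $s=O(d^9)$ of defining equations via Krull's height theorem and sum over all strata $a\le s$ --- both yield the same order, so your variant is sound.
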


\begin{proof}
  We assume $\NL_{d,g}$ is non-emtpy, since these inequalities are trivially satisfied if $\NL_{d,g} = \emptyset$ with $\pNL_{d,g}=1$. Let $P_2 \eqdef \mathbb{P} \left( \mop{End}( \mathbb{C}^{q(r)-N_{r-4}}, R_r) \right)$ and $P_3 \eqdef \mathbb{P} \left( \mop{End}( R_{r+1}, \mathbb{C}^{p(r+1)}) \right)$ denote the second and third factors of $\ca$. Let~$\alpha \eqdef (q(r) - N_{r-4}) N_r - 1$ and~$\beta \eqdef p(r+1) N_{r+1} - 1$ denote the dimensions of~$P_2$ and $P_3$ respectively. 
  Let~$\mathcal{E}$ be the projection of~$\overline{\mathcal{B}}$ on~$\mathbb{P}(R_4) \times P_2$.
  The fibers of the map~$\overline{\mathcal{B}}\to \mathcal{E}$ are projective subspaces of~$P_3$ since Constraints~\ref{item:1} and~\ref{item:1bis} are linear in~$\psi$. The dimension of these fibers are~$\beta' \eqdef p(r+1)^2 - 1$.
  So, by Proposition~\ref{prop:projection_formula},
  \begin{equation}
    \theta_3^{\beta} [\mathcal{E}]_\mathbb{Z} \leq \theta_3^{\beta'} \left[ \overline{\mathcal{B}} \right]_\mathbb{Z}.
  \end{equation}
  Next, the map $\mathcal{B} \to \ch_{d,g}(4)$ factors through~$\mathcal{E}$ and the fibers of the corresponding map~$\mathcal{E} \to \ch_{d,g}(4)$ have dimension $\alpha'\eqdef (q(r)-N_{r-4}) q(r) - 1$.
  Finally, let~$e$ be the dimension of the fibers of the map~$\ch_{d,g}(4) \to \NL_{d,g}$. (If this dimension is not generically constant, we work one component at a time.)
  Once again, by Proposition~\ref{prop:projection_formula}, we obtain
  \begin{equation}
    \theta_2^{\alpha} [\NL_{d,g}]_\mathbb{Z} \leq \theta_2^{\alpha'+e} [\mathcal{E}]_\mathbb{Z}.
  \end{equation}
  Since~$[\NL_{d,g}]_\mathbb{Z} = m(\pNL_{d,g}) \eta + \deg(\pNL_{d,g}) \theta_1$, taking
  $L = 15 \log(d+2)$, we get
  \begin{align}
    \deg \pNL_{d,g} &\leq  \text{coeff of } {\theta_1 \theta_2^{\alpha - \alpha' - e} \theta_3^{\beta-\beta'}} \text{ in } \left( L\eta + \theta_1+\theta_2+\theta_3 \right)^{\alpha+\beta - \alpha' - \beta' - e + 1} \\
                   &\leq 3^{\alpha+\beta - \alpha' - \beta' - e + 1}.
  \end{align}
  The exponent is a polynomial in~$d$ and~$g$. Unless $d^2 \geq 8g-8$, $\NL_{d,g}$ is empty. So,
  we may bound the exponent with a polynomial only in~$d$, which turns out to be of degree~$9$.
  Therefore,~$\deg\pNL_{d,g} \leq A^{d^9}$ for some constant~$A > 0$.

  Similarly,
  \begin{align}
    m(\pNL_{d,g}) &\leq  \text{coeff of } {\eta \theta_2^{\alpha - \alpha' - e} \theta_3^{\beta-\beta'}} \text{ in } \left( L\eta + \theta_1+\theta_2+\theta_3 \right)^{\alpha+\beta - \alpha' - \beta' - e + 1} \\
                 &\leq (\alpha+\beta - \alpha' - \beta' - e + 1) L 3^{\alpha+\beta - \alpha' - \beta' - e} \\
                  &\leq 2^{O(d^9)}.
  \end{align}
  By \textcite[Lemma~2.30.3]{DAndreaKrickSombra_2013},
  \begin{equation}
    \|\pNL_{d,g}\|_1 \leq \exp(m(\pNL_{d,g})) 36^{\deg \pNL_\Delta},
  \end{equation}
  and this implies the claim, for some other constant~$A > 0$.
\end{proof}

\noindent For the following, we write $a \uparrow b$ for $a^b$. This is a right-associative operation.

\begin{corollary}\label{cor:bounds-NL-Delta}
  There is an absolute constant~$A > 0$ such that
  for any~$\Delta > 0$,
  \begin{equation*}\label{eq:NL_bounds_w_A}
  \deg(\pNL_\Delta) \leq A \uparrow \Delta \uparrow \tfrac 92 \text{ and } \|\pNL_\Delta\|_1 \leq 2 \uparrow A \uparrow \Delta \uparrow \tfrac92. 
  \end{equation*}
  In fact, one can obtain the following explicit bounds
\begin{equation*}\label{eq:NL_bounds}
  \deg(\pNL_\Delta) \leq 3^{(\Delta+20)^{9/2}} \text{ and } \log_2 \|\pNL_\Delta\|_1 \leq (\Delta+60)^5 3^{(\Delta+20)^{9/2}}.
\end{equation*}
\end{corollary}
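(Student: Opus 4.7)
The plan is to reduce directly to Theorem~\ref{thm:nl-bounds} via the identification of $\NL_\Delta$ with a single $\NL_{d,g}$ provided by the explicit decomposition~\eqref{eq:11}.

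First I would observe that, by~\eqref{eq:11}, either $\NL_\Delta$ is empty (whence $\pNL_\Delta = 1$ satisfies both inequalities trivially) or $\NL_\Delta$ equals exactly one $\NL_{d,g}$ with $d \in \{4t,\, 4t+1,\, 4t+2\}$ for $t = \lceil \sqrt{\Delta}/4 \rceil$. Consequently $\pNL_\Delta$ agrees with $\pNL_{d,g}$ up to sign, and in all nonempty cases $d \leq \sqrt{\Delta}+6$.

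Next I would plug this into Theorem~\ref{thm:nl-bounds}, giving $\deg \pNL_\Delta \leq A^{d^9} \leq A^{(\sqrt{\Delta}+6)^9}$ and analogously $\|\pNL_\Delta\|_1 \leq 2^{A^{(\sqrt{\Delta}+6)^9}}$. For the qualitative form of the bounds it suffices to note that $(\sqrt{\Delta}+6)^9 = O(\Delta^{9/2})$, and then absorb the implicit multiplicative constant into the base by enlarging $A$. This immediately yields $\deg \pNL_\Delta \leq A \uparrow \Delta \uparrow \tfrac92$ and $\|\pNL_\Delta\|_1 \leq 2 \uparrow A \uparrow \Delta \uparrow \tfrac 92$ for some new absolute constant $A>0$, proving the first displayed inequality.

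For the explicit constants in the second displayed inequality, I would revisit the interior of the proof of Theorem~\ref{thm:nl-bounds} and track the concrete exponent $\alpha + \beta - \alpha' - \beta' - e + 1$ as a polynomial in $d$ (using $d^2 > 8g-8$ to eliminate $g$), substitute $d \leq \sqrt{\Delta}+6$, and verify numerically that the resulting polynomial in $\Delta$ lies below $(\Delta+20)^{9/2}$ (resp.\ $(\Delta+60)^5\, 3^{(\Delta+20)^{9/2}}$). The main obstacle here is that a naive expansion gives $(\sqrt{\Delta}+6)^2 = \Delta + 12\sqrt{\Delta} + 36$, which is not bounded by $\Delta+20$; so the cross-term $12\sqrt{\Delta}$ must be swallowed by the multiplicative slack between the absolute constant of Theorem~\ref{thm:nl-bounds} and the clean base $3$. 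This is a bookkeeping exercise of the same flavour as the inequality $sL \leq (d+2)^{15}$ at the end of the preceding lemma, and is best confirmed with a computer algebra system such as Mathematica.
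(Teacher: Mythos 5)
Your proposal is correct and follows essentially the same route as the paper: reduce $\NL_\Delta$ to a single $\NL_{d,g}$ via~\eqref{eq:11} with $d \leq \sqrt{\Delta}+6$, apply Theorem~\ref{thm:nl-bounds} and absorb the constant from $(\sqrt{\Delta}+6)^9 = O(\Delta^{9/2})$ into a new base $A$, and obtain the explicit form by re-running the exponent bookkeeping of that theorem's proof in a computer algebra system. The paper's own proof is just a terser version of exactly this argument.
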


\begin{proof}
  The first statement follows directly from~\eqref{eq:11} and Theorem~\ref{thm:nl-bounds} using a different~$A$.
 The second statement is found by carrying out the arguments in the proof of Theorem~\ref{thm:nl-bounds} with the help of a computer algebra system.
\end{proof}

\subsection{How good are these bounds?}  
We can compare our degree bounds for~$\pNL_\Delta$ to the exact degrees computed by~\textcite{MaulikPandharipande_2013},
from which it actually follows that
\begin{equation}\label{eq:18}
  \deg \pNL_\Delta = O(\Delta^\frac{19}{2}).
\end{equation}
This sharper bound does not directly imply a sharper bound on the height of~$\pNL_\Delta$ but it suggests the following conjecture.
This would improve subsequently Theorems~\ref{thm:separation-bound} and~\ref{thm:liouville}. In particular,  Equation~\eqref{eq:main_result} would be exponential in the size of the coefficients, as opposed to being doubly exponential.
\begin{conjecture}\label{conj:better-height-bounds}
  As $\Delta$ goes to $\infty$ we have
  \[ \log \|\pNL_\Delta\|_1\leq \Delta^{\frac{19}{2}+o(1)}. \]
\end{conjecture}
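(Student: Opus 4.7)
The conjectured bound $\log \|\pNL_\Delta\|_1 \leq \Delta^{19/2+o(1)}$ asserts that the logarithmic height of~$\pNL_\Delta$ is essentially comparable to its degree, which is known to be $O(\Delta^{19/2})$ by the work cited in~\eqref{eq:18}. My plan would be to abandon the arithmetic B\'ezout bookkeeping used in the proof of Theorem~\ref{thm:nl-bounds}, since the factor~$3^{\alpha+\beta-\alpha'-\beta'-e+1}$ (together with the logarithm of the number of defining equations) is the structural source of the doubly exponential blow-up. Any polynomial-in-$\Delta$ bound must instead exploit arithmetic structure specific to the Noether--Lefschetz loci themselves, not just the fact that they are images of an explicitly described incidence variety.

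The natural candidate for such structure comes from the modular interpretation of~$\NL_\Delta$ due to Borcherds, Kudla--Millson, and, in this setting, Maulik--Pandharipande: the generating series $\sum_\Delta [\NL_\Delta]\, q^\Delta$ is a vector-valued modular form for a congruence subgroup, and the individual $\NL_\Delta$ are Heegner divisors on the orthogonal Shimura variety attached to the primitive lattice~$H_\mathbb{Z}/\mathbb{Z}h$. The key step is to realize an equation for~$\NL_\Delta$ as a \emph{Borcherds lift}: the pullback, through the period map, of an automorphic form whose Fourier coefficients are controlled by a weakly holomorphic input form of weight~$-9/2$ or so. The Hardy--Ramanujan--Rademacher expansion then bounds those Fourier coefficients by~$\exp(O(\sqrt{\Delta}))$, which is well within the conjectured range.

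Given such a lift, the plan breaks into three steps. First, identify~$\pNL_\Delta$, up to an explicit scalar with controlled height, with the pullback of a Borcherds product on the Shimura variety to the affine chart~$U_4$ via the period map. Second, bound the Mahler measure of the pullback in terms of the sup-norm of the automorphic form on a fundamental domain, converting this through Lemma~\ref{lem:sup_dominates} into a bound on~$\|\pNL_\Delta\|_1$. Third, compare the resulting transcendental form to the algebraic polynomial~$\pNL_\Delta$ using the local Torelli theorem and the fact that both vanish to order one on the same irreducible divisors.

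The main obstacle is step one: the period map $\widetilde{U}_4 \to \mathcal{D}$ is transcendental, and the pullback of an automorphic form is not tautologically an integer polynomial of bounded height in the coefficients of~$f$. Promoting the Borcherds lift to an arithmetic statement would likely require either an integral model of the orthogonal Shimura variety together with Faltings-style arithmetic intersection theory, or a direct algebraic realization of~$\pNL_\Delta$ as a resultant of explicit small-height polynomials extracted from a $K3$ lattice computation. Either route amounts to genuine new input: the Hilbert-scheme approach underlying Theorem~\ref{thm:nl-bounds} cannot, by itself, bridge the gap from doubly exponential to polynomial.
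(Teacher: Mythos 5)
The statement you are addressing is stated in the paper as a \emph{conjecture}, and the paper offers no proof of it: the only support given is the exact degree computation of Maulik and Pandharipande, which yields $\deg \pNL_\Delta = O(\Delta^{19/2})$ via the modular form $\Theta - \Psi$, together with the heuristic that the height ought to track the degree. Your proposal likewise does not prove the statement, and you say so yourself. You correctly diagnose that the arithmetic B\'ezout bookkeeping of Theorem~\ref{thm:nl-bounds} cannot be repaired to give a polynomial bound (the factor $3^{\alpha+\beta-\alpha'-\beta'-e+1}$ is irreducibly doubly exponential in $d$), and the route you sketch --- interpreting $\NL_\Delta$ as a Heegner divisor on the orthogonal Shimura variety attached to $H_\mathbb{Z}/\mathbb{Z}h$ and controlling its height through Borcherds lifts and the modularity of arithmetic Chow groups --- is essentially the same direction the authors themselves point to in their concluding remarks on optimal bounds. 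So your plan is consistent with the paper, but it is a research program, not an argument.

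The concrete gap is the one you flag as your ``step one'' and then defer: passing from a sup-norm or Faltings-height bound for an automorphic form on the period domain to a bound on $\|\pNL_\Delta\|_1$, the $1$-norm of a specific primitive integer polynomial in the $35$ coefficients of $f$. The period map is transcendental, and Lemma~\ref{lem:sup_dominates} only converts between $m(\cdot)$, $\|\cdot\|_{\sup}$ and $\|\cdot\|_1$ for a polynomial already in hand; it does not transport a bound on an automorphic form across the period map. Your ``step three'' (matching divisors of vanishing via local Torelli) identifies the zero loci but says nothing about the normalization constant relating the two functions, and controlling that constant is precisely where an integral model and arithmetic intersection theory would be needed. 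Since neither the paper nor your proposal supplies this, the statement remains open; your write-up should be read as a (reasonable) strategy discussion rather than a proof.
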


Now we turn to the details of \eqref{eq:18}. Following {\mancite\textcite{MaulikPandharipande_2013}} (but replacing~$q$ by~$q^8$),
consider the following power series
\begin{equation}
  A \eqdef \sum_{n \in \Z} q^{n^2}, \  B \eqdef \sum_{n\in \Z} (-1)^n q^{n^2},\ \Psi = 108 \sum_{n>0} q^{8n^2},
\end{equation}
and~$\Theta$ defined by
\begin{multline}
  2^{22} \Theta \eqdef 3A^{21}-81 A^{19}B^2 -627 A^{18}B^3 -14436 A^{17}B^4 -20007 A^{16}B^5\\
  -169092 A^{15}B^6 -120636 A^{14}B^7 -621558 A^{13}B^8 -292796 A^{12}B^9\\
  -1038366 A^{11}B^{10} -346122 A^{10}B^{11} -878388 A^{9} B^{12} -207186 A^8 B^{13}\\
  -361908 A^7 B^{14} -56364 A^6 B^{15} -60021 A^5 B^{16} -4812 A^4 B^{17} -1881 A^3 B^{18}\\ -27 A^2 B^{19}+ B^{21}.
\end{multline}
From~\parencite[Corollary~2]{MaulikPandharipande_2013}, we have, for any~$\Delta > 0$,
\begin{equation}\label{eq:30}
  \deg \pNL_\Delta \leq \text{coefficient of $q^\Delta$ in $\Theta - \Psi$}.
\end{equation}
In fact, this is an equality when the components of~$\NL_\Delta$ are given appropriate multiplicities.
Let~$\Theta[k]$ denote the coefficient of~$q^k$ in~$\Theta$.
By~\eqref{eq:30}, we only need to bound~$\Theta[\Delta]$ in order to bound~$\deg\pNL_\Delta$.
To do so, replace every negative sign in the definition of $\Theta$ by a positive sign, including those in $B$, to obtain the \emph{coefficientwise} inequality
\begin{equation}
  \Theta \le 6 \bigg( \sum_{n \in \mathbb{Z}} q^{n^2} \bigg)^{21}.
\end{equation}
The coefficient of $q^k$ in $\left( \sum_{n\in \mathbb{Z}} q^{n^2} \right)^{21}$ is
\begin{equation}\label{eq:set_k}
  r_{21}(k) \eqdef \#\left\{ (a_1,\dots,a_{21}) \in \Z^{21} \ \middle|\  \sum_{i} a_i^2 = k\right\}.
\end{equation}
The asymptotic bound~$r_{d}(k) = O(k^{\frac d2 -1})$, for~$d > 4$, is well known \parencite[e.g.][Satz~5.8]{Kraetzel_2000}.

\section{Separation bound}

We now state and prove the main results.
Recall that $a \uparrow b =  a^b$ is right associative and for $\gamma \in H_\Z$ we defined the discriminant $\Delta(\gamma)$ as $(\gamma\cdot h)^2 - 4 \gamma\cdot \gamma$.

\begin{theorem}\label{thm:separation-bound}
For any~$f \in U_4$ with algebraic coefficients  
there is a computable constant~$c > 1$ such that
for any $\gamma \in H^2(\Xf, \mathbb{Z})$,  if~$\gamma \cdot \omega_f \neq 0$, then
 \[ \left| \gamma \cdot \omega_f \right| > \left( 2 \uparrow c \uparrow \Delta(\gamma) \uparrow \tfrac92 \right)^{-1}  . \]
\end{theorem}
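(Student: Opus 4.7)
The plan is to combine three ingredients: the deformation argument from Section~\ref{sec:deformation-argument} (specifically Corollary~\ref{coro:picard-perturbation}), the Noether--Lefschetz height and degree bounds of Corollary~\ref{cor:bounds-NL-Delta}, and a standard Liouville inequality for algebraic numbers.

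First, assuming $\gamma \cdot \omega_f \ne 0$ and writing $\delta := |\gamma \cdot \omega_f|$, I note that $\gamma \notin \Z h$ (otherwise $\gamma \cdot \omega_f = 0$), so $\Delta := \Delta(\gamma) > 0$ by the Hodge index theorem. The bound is trivial when $\delta$ exceeds a fixed threshold, so I would assume $\delta < \epsf/4$ and apply Corollary~\ref{coro:picard-perturbation} with $\epsilon = 4\delta$ to produce a monomial $m \in R_4$ and $t \in \C\setminus\{0\}$ with $|t| \le 4 \Cf \delta$ and $\gamma \in \Pic(X_{f+tm})$. Since $\gamma \notin \Z h$, this means $f + tm \in \NL_\Delta$, so
\begin{equation}
  \pNL_\Delta(f + tm) = 0.
\end{equation}

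The next step is to look at the one-variable specialization $P(s) := \pNL_\Delta(f + s m) = \sum_{j=0}^N a_j s^j \in \C[s]$, of degree at most $N := \deg \pNL_\Delta$. Assuming $P \not\equiv 0$ (see below), let $k$ be the least index with $a_k \ne 0$; the relation $P(t) = 0$ with $t \ne 0$ gives, after dividing through by $t^k$,
\begin{equation}
  |a_k| \le |t| \sum_{j > k} |a_j|\, |t|^{j-k-1} \le |t| \cdot 2^{O(N + \log \|\pNL_\Delta\|_1)},
\end{equation}
where the second inequality uses the crude bound $\sum_j |a_j| \le \|\pNL_\Delta\|_1 \cdot 2^N \cdot \max(1,\|f\|_1)^N$, valid for $|t| \le 1$. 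On the other hand, each $a_j$ is a polynomial in the algebraic coefficients of $f$ with integer coefficients dictated by $\pNL_\Delta$, so $a_k$ is a \emph{nonzero} algebraic number whose arithmetic is controlled. A Liouville inequality then supplies a lower bound $|a_k| \ge 2^{-c_1 (N + \log \|\pNL_\Delta\|_1)}$, where $c_1$ is a computable constant depending only on the degrees and heights of the coefficients of $f$. Combining the two estimates yields $|t| \ge 2^{-c_2(N + \log\|\pNL_\Delta\|_1)}$, hence $\delta \ge (4 \Cf)^{-1} 2^{-c_2(N+\log\|\pNL_\Delta\|_1)}$. Substituting $N,\log\|\pNL_\Delta\|_1 \le A \uparrow \Delta \uparrow \tfrac92$ from Corollary~\ref{cor:bounds-NL-Delta} produces the claimed separation bound.

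The principal obstacle is the degenerate case $P \equiv 0$, meaning the entire line $f + \C m$ is contained in $\NL_\Delta$; since $\NL_\Delta$ has codimension one this is a measure-zero condition on $m$, and the flexibility in the choice of monomial in Proposition~\ref{prop:perturbation} should allow one to rule it out (alternatively, one can redo the argument along a generic linear combination of monomials). Secondary technicalities include making the Liouville constant $c_1$ fully explicit as a computable function of $f$, and verifying that the constants absorbed into ``$2^{O(\cdot)}$'' do not spoil the $\Delta^{9/2}$ growth; these amount to routine but careful bookkeeping, leaving the $P \equiv 0$ case as the only conceptually non-trivial point.
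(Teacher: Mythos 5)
Your overall strategy---perturb $f$ to $f+tm$ with $\gamma\in\Pic(X_{f+tm})$ via Corollary~\ref{coro:picard-perturbation}, restrict a Noether--Lefschetz equation to the line $s\mapsto f+sm$, and play the smallest nonzero coefficient (bounded below by Liouville's inequality) against $|t|$---is exactly the paper's strategy, and your bookkeeping of degrees and heights via Corollary~\ref{cor:bounds-NL-Delta} matches the paper's. But the point you defer as ``the only conceptually non-trivial point,'' namely the degenerate case $P\equiv 0$ where the whole line $f+\C m$ lies in $\NL_{\Delta}$, is a genuine gap, and the fixes you sketch do not obviously close it. The monomial $m$ is not free: it is the specific direction supplied by Lemma~\ref{lem:technical1} along which $|\phi_{\gamma,m}'(0)|\geq C\|\gamma\|$, which is what makes Smale's theorem applicable; for other monomials the perturbation argument may simply fail. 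Passing to a ``generic linear combination'' could in principle preserve the derivative lower bound (an open condition), but the set of directions for which the line lies inside $\NL_{\Delta}$ depends on $\Delta=\Delta(\gamma)$, so you would need a \emph{computable} choice of direction, uniform enough in $\gamma$ that its height feeds into the Liouville step without degrading the final bound. That is not routine bookkeeping.

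The paper's resolution is a different device that you do not use: it never evaluates $\pNL_{\Delta}$ itself along the line, but a suitable higher-order partial derivative of it. By Lemma~\ref{lem:mult-NL}, $\mult_{f+um}\pNL_{\Delta}$ equals $\#\left(\Pic(X_{f+um})\cap E_{\Delta}\right)$, and since the branch of $\NL_{\Delta}$ attached to $\gamma$ meets the line only at isolated points near $t$ (because $u\mapsto\gamma\cdot\omega_{f+um}$ is analytic and nonzero at $u=0$), this multiplicity has a strict local maximum at $u=t$. Hence some normalized partial derivative $P$ of $\pNL_{\Delta}$ vanishes at $f+tm$ but not identically along the line, and $Q(s)=P(f+sm)$ is automatically a nonzero polynomial with $Q(t)=0$. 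The cost is only a factor $2^{\deg\pNL_{\Delta}}$ in the height of $Q$, which is absorbed into the constant. You should replace your genericity argument with this multiplicity argument; the rest of your proof then goes through as written.
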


To make the connection with~\eqref{eq:intro},
recall the map~$T$ introduced in~\eqref{eq:tube}. %
We choose a basis~$\gamma_1,\dotsc,\gamma_{21}$ of~$H_3(\mathbb{P}^3\setminus X_f, \Z) \simeq H_\Z/ \Z h$,
write~$T(\gamma) = \sum_i x_i \gamma_i$
and observe that~$\Delta(\gamma)$ is a quadratic function of the coordinates~$x_i$,
so that~$\Delta(\gamma)^{\frac12} \leq C \max_i |x_i|$ for some constant~$C$ depending on the choice of basis.

\subsection{Multiplicity of Noether--Lefschetz loci}

The multiplicity of some nonzero polynomial~$F \in \mathbb{C}[x_1,\dotsc,x_s]$
at a point~$p \in \mathbb{C}^s$ is the unique integer~$k$ such that all partial
derivatives of~$F$ of order~$< k$ vanish at~$p$ and some partial derivative of
order~$k$ does not. It is denoted by~$\mop{mult}_p F$.

The multiplicity of~$\pNL_\Delta$ at some~$f\in U_4$ is related to the elements of~$\Pic(\Xf)$ with discriminant~$\Delta$.
For~$\Delta > 0$, let~$E_\Delta$ be a set of representatives of the equivalence classes of the relation~$\sim$ on~$H_\mathbb{Z}$ defined by
\begin{equation}
  \gamma \sim \gamma' \text{ if } \exists a\in \mathbb{Q}^*, b\in \mathbb{Q}: \gamma' = a \gamma + bh.
\end{equation}

\begin{lemma}
  \label{lem:mult-NL}
  For any~$f\in U_4$ and any~$\Delta > 0$,
  \[ \mult_f \pNL_\Delta = \# \left( \Pic \Xf \cap E_\Delta \right). \]
\end{lemma}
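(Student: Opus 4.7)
The plan is to compute $\mult_f \pNL_\Delta$ by decomposing the analytic germ of $\NL_\Delta$ at~$f$ into its irreducible branches. Since multiplicity is local and $\widetilde{U}_4 \to U_4$ is a local isomorphism, I would fix a preimage $\tilde f \in \widetilde{U}_4$. By~\eqref{eq:1}, the lifted locus $\wNL_\Delta$ is, near $\tilde f$, the union of the hypersurfaces $H_\gamma \eqdef \mathcal{P}^{-1}\{w \in \mathcal{D} : w\cdot\gamma=0\}$ as $\gamma$ runs over elements of $H_\Z$ with $\Delta(\gamma)=\Delta$ passing through $\tilde f$. The latter means $\gamma\cdot\omega_f=0$, which by the Lefschetz $(1,1)$-theorem is equivalent to $\gamma\in\Pic(\Xf)$. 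Since $\mathcal{P}$ is a submersion (local Torelli) and the functional $w\mapsto\gamma\cdot w$ is nonzero on $T_{\omega_f}\mathcal{D}=\{w : w\cdot h=w\cdot\omega_f=0\}$ (otherwise $\gamma\in(\C h+\C\omega_f)\cap H_\R\subseteq \R h$, forcing $\Delta(\gamma)=0$), each $H_\gamma$ is a smooth analytic hypersurface at~$\tilde f$.

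Next I would match the branches to equivalence classes in $E_\Delta$. Since $\mathcal{P}$ is a submersion, $H_\gamma=H_{\gamma'}$ as germs iff the hyperplanes cut in $T_{\omega_f}\mathcal{D}$ by $\gamma$ and $\gamma'$ coincide, iff $\gamma-c\gamma'\in\C h+\C\omega_f$ for some $c\in\C^*$. Pairing with $\overline{\omega}_f$ and using $\gamma\cdot\overline{\omega}_f=\gamma'\cdot\overline{\omega}_f=0$ together with $\omega_f\cdot\overline{\omega}_f>0$ kills the $\omega_f$-component, so $\gamma=c\gamma'+ah$ with $a,c\in\C$. Rationality of $\gamma,\gamma'$ together with the $\Q$-linear independence of $h$ and $\gamma'$ (forced by $\Delta(\gamma')>0$) then imply $c\in\Q^*$, $a\in\Q$, which is precisely the relation~$\sim$. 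Thus the distinct local branches of $\wNL_\Delta$ at $\tilde f$ are in bijection with $\Pic(\Xf)\cap E_\Delta$, and this set is finite.

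For the multiplicity count, $\pNL_\Delta$ is squarefree and primitive by definition, so $\NL_\Delta=V(\pNL_\Delta)$ is a reduced scheme; excellence of the polynomial ring $\Z[u_1,\dots,u_{35}]$ then implies that its analytic germ at~$f$ is reduced as well. Consequently, in the local analytic ring at $\tilde f$, $\pNL_\Delta$ factors, up to a unit, as a product of distinct irreducible analytic equations, one per branch. Since each $H_\gamma$ is smooth, its defining equation has multiplicity one, and therefore
\begin{equation*}
  \mult_f \pNL_\Delta \ =\ \#\{\text{branches at }\tilde f\} \ =\ \#(\Pic(\Xf)\cap E_\Delta).
\end{equation*}

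The delicate step is the tangent-space comparison in the middle paragraph: passing from a bare $\C$-proportionality of linear functionals to the integral relation $\sim$ demanded by the statement requires combining Hodge-theoretic positivity, reality of integer classes, and the Hodge index bound $\Delta(\gamma)>0$ to eliminate the $\omega_f$-component and force rationality of the coefficients.
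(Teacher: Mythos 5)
Your proof is correct and follows essentially the same route as the paper's: lift to $\widetilde{U}_4$, realize $\wNL_\Delta$ locally as a union of smooth branches $\mathcal{P}^{-1}\{w\cdot\gamma=0\}$ indexed by $\Pic(\Xf)$ classes of discriminant $\Delta$ modulo $\sim$, and equate $\mult_f\pNL_\Delta$ with the number of branches. You supply details the paper leaves implicit (the tangent-space identification of branches with $\sim$-classes and the reduced local factorization), but the argument is the same.
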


\begin{proof}
    Let~$\wNL_{\Delta}$ be the lift of~$\NL_{\Delta}$ in~$\widetilde{U}_4$. Arguing as in \S\ref{sec:defNL}, $\wNL_\Delta$ is the union of smooth analytic hypersurfaces:
    \begin{equation}
      \wNL_{\Delta} = \bigcup_{\eta \in E_{\Delta}} \mathcal{P}^{-1} \left\{ w\in \mathcal{D} \st w\cdot \eta = 0  \right\}.
    \end{equation}
    Then the same holds locally for $\NL_\Delta$.

  For any $f \in U_4$ it follows from the smoothness of branches of $\NL_\Delta$ that
  $\mop{mult}_f \pNL_{\Delta}$ is exactly the number of branches meeting at~$f$.
  The branches meeting at~$f$ are described by the elements of~$\Pic \Xf$ with discriminant~$\Delta$.
  Two elements~$\gamma$ and~$\gamma'$ describe the same branch (that is the same hyperplane section of~$\mathcal{D}$)
  if and only if~$\gamma' \sim \gamma$.
  So $\mop{mult}_f \pNL_\Delta$ is exactly the number of equivalence classes in $\left\{ \gamma \in \Pic \Xf \st \Delta(\gamma) = \Delta\right\}$ for this relation.
\end{proof}

\subsection{Proof of Theorem~\ref{thm:separation-bound}}

We first apply Corollary~\ref{coro:picard-perturbation}. 
Let~$\epsilon = 4\abs{\gamma\cdot \omega_f}$.
The corollary gives constants~$\Cf > 0$ and~$\epsf > 0$ (depending only on~$f$)
such that if~$\epsilon < \epsf$ then
there exists a monomial~$m \in R_4$
and $t\in \mathbb{C}$ such that
\begin{equation}
  \abs{t} \leq \Cf \epsilon\label{eq:15} 
\end{equation}
and
\begin{equation}\label{eq:28}
  \gamma \in \Pic X_{f + tm}.
\end{equation}
Assume that~$\epsilon < \epsf$ and let~$t$ and~$m$ be as above.
As $u$ varies, the number~$\# \left( \Pic(X_{f+um}) \cap E_\Delta \right)$
has a strict local maximum at~$u=t$.
By Lemma~\ref{lem:mult-NL}, so does~$\mult_{f+um} \pNL_{\Delta(\gamma)}$.
In particular, there is some higher-order partial derivative of~$\pNL_\Delta$
which vanishes at~$f+tm$ but not at~$f+um$, for~$u$ close to but not equal to~$t$.
Let $\alpha \in \mathbb{N}^{35}$ be the multi-index for which
\begin{equation}
  P \eqdef \frac{1}{\alpha_1! \dotsb \alpha_{35}!} \frac{\partial^{|\alpha|} \pNL_\Delta}{\partial u^\alpha} \in \mathbb{Z} [u_1,\dotsc,u_{35}]
\end{equation}
is this derivative.
For a monomial~$u^\beta \eqdef u_1^{\beta_1} \dotsb u_{35}^{\beta_{35}}$ we have
\begin{equation}
  \frac{1}{\alpha_1! \dotsb \alpha_{35}!}\frac{\partial^{|\alpha|} u^\beta}{\partial u^\alpha} = \prod_{i=1}^{35} \binom{\beta_i}{\alpha_i} u^{\beta - \alpha}.
\end{equation}
Since~$\binom{\beta_i}{\alpha_i} \leq 2^{\beta_i}$, it follows that
\begin{equation}
 \left \| \frac{1}{\alpha_1! \dotsb \alpha_{35}!} \frac{\partial^{|\alpha|} \pNL_\Delta}{\partial u^\alpha} \right\|_1 \leq 2^{\deg \pNL_\Delta} \|\pNL_\Delta\|_1.
\end{equation}

Let~$Q \in \Qbar[s]$ be the polynomial~$Q(s) \eqdef P(f+sm)$.
By construction~$Q \neq 0$ and~$Q(t) = 0$.
Clearly~$\deg Q \leq \deg \pNL_\Delta$, and we check that
\begin{equation}
  \|Q\|_1 \leq \|P\|_1 \left( \|f\|_1 + 1 \right)^{\deg P}.
\end{equation}
and then
\begin{equation}
  \|Q\|_1 \leq  2^{\deg \pNL_\Delta} \|\pNL_\Delta\|_1 \left( \|f\|_1 + 1 \right)^{\deg \pNL_\Delta}.
\end{equation}
From Corollary~\ref{cor:bounds-NL-Delta}, we find a constant~$c$ depending only on~$f$ such that 
\begin{equation}\label{eq:Qc}
  \deg Q \leq c \uparrow \Delta \uparrow \tfrac 92 \text{ and } \|Q\|_1 \leq 2 \uparrow c \uparrow \Delta \uparrow \tfrac 92.
\end{equation}

We write~$Q =\sum_{i=0}^{\deg Q} q_i s^i$.
Let~$k$ be the smallest integer such that~$q_k \neq 0$.
Since~$Q(t) = 0$, it follows that
\begin{equation}
  \abs{q_k t^k} \leq \sum_{i=k+1}^{\deg Q} \abs{q_i t^i}.
\end{equation}
If~$\epsilon < C_f^{-1}$, we have~$\abs{t} < 1$, by~\eqref{eq:15},
and it follows that
\begin{equation}\label{eq:one}
  \abs{t} \geq \frac{\abs{q_k}}{\|Q\|_{1}}.
\end{equation}

Let $D \ge 1$ be the degree of the number field generated by the coefficients of $f$. Let $H>0$ be an upper bound for the absolute logarithmic Weil height for the coefficient vector of $f$~\parencite[p.77]{Waldschmidt}. Then $q_k$ is an algebraic number defined by a polynomial expression $\widetilde{q}_k(f)$ in the coefficients of $f$ with $\widetilde{q}_k$ having integer coefficients. Liouville's inequality~\parencite[Proposition 3.14]{Waldschmidt} gives
  \begin{equation}
    \abs{q_k} \ge \|\widetilde{q}_k\|_1^{-D+1} e^{-D H \deg \widetilde{q}_k}.
  \end{equation}
 It is easy to see that $\deg \widetilde{q}_k \le \deg \pNL_\Delta$ and $\|\widetilde{q}_k\|_1 \le 2^{\deg \pNL_\Delta} \|\pNL_\Delta\|_1 $, the latter can be bounded by $\|Q\|_1$. 

By~\eqref{eq:15}, this leads to
\begin{equation}
  \label{eq:13}
  \epsilon \geq \left( 2\uparrow c \uparrow \Delta \uparrow \tfrac 92 \right)^{-D(1+H)},
\end{equation}
for some other constant~$c$ depending only on~$f$. 
Recall that~\eqref{eq:13} holds with the assumption that~$\epsilon \leq \epsilon_f$
and~$\epsilon < C_f^{-1}$. However, we can choose~$c$ large enough so that 
the right-hand side of~\eqref{eq:13} is smaller than~$\epsilon_f$ and~$C_f^{-1}$. 
Then~\eqref{eq:13} holds unconditionally. Absorb the outer exponent of~\eqref{eq:13} into $c$ to conclude the proof of Theorem~\ref{thm:separation-bound}. \qed

\subsection{Numbers \emph{à la} Liouville}

Let~$(\theta_i)_{i\geq 0}$ be a sequence of positive integers such that~$\theta_i$ is a strict divisor of~$\theta_{i+1}$
for all~$i \geq 0$ (in particular~$\theta_i \geq 2^i$.)
Consider the number
\[ L_\theta \eqdef \sum_{i=0}^\infty \theta_i^{-1}. \]
As a corollary to the separation bound obtained in Theorem~\ref{thm:separation-bound},
the following result states that~$L_\theta$ is not a ratio of periods of quartic surfaces when~$\theta$ grows fast enough.

\begin{theorem}\label{thm:liouville}
  If~$\theta_{i+1} \geq 2 \uparrow 2 \uparrow \theta_i \uparrow 10$,
  for all~$i$ large enough,
  then~$L_\theta$ is not equal to~$\frac{\gamma_1 \cdot \omega_f}{\gamma_2 \cdot \omega_f}$
  for any~$\gamma_1, \gamma_2 \in H_\mathbb{Z}$ and any~$f\in U_4$ with algebraic coefficients.
\end{theorem}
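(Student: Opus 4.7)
The plan is the classical Liouville-style contradiction, using Theorem~\ref{thm:separation-bound} as the arithmetic lower bound that plays the role usually played by Liouville's inequality for algebraic numbers. I would begin by supposing for contradiction that $L_\theta = (\gamma_1\cdot\omega_f)/(\gamma_2\cdot\omega_f)$ for some $\gamma_1,\gamma_2\in H_\Z$ and some $f\in U_4$ with algebraic coefficients. Abbreviate $a \eqdef \gamma_1\cdot\omega_f$ and $b\eqdef \gamma_2\cdot\omega_f$ (with $b\neq 0$). Since $\theta_i\mid\theta_N$ for all $i\leq N$, the partial sum $S_N \eqdef \sum_{i=0}^N \theta_i^{-1}$ is of the form $p_N/\theta_N$ with $p_N\in\Z$ satisfying $p_N \leq L_\theta\cdot\theta_N + 1 = O(\theta_N)$.

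The central object is then the lattice element $\eta_N \eqdef \theta_N\gamma_1 - p_N\gamma_2 \in H_\Z$. A direct computation gives
\begin{equation*}
  \eta_N\cdot\omega_f \;=\; b\,\theta_N\,\bigl(L_\theta - S_N\bigr) \;=\; b\,\theta_N \sum_{i>N} \theta_i^{-1},
\end{equation*}
and, because $\theta_{i+1}$ grows doubly exponentially in $\theta_i$, the tail is dominated by its first term up to a factor of $2$, so $\abs{\eta_N\cdot\omega_f} \leq 2\abs{b}\,\theta_N/\theta_{N+1}$. Note also that $\eta_N\cdot\omega_f$ cannot vanish: otherwise $L_\theta = p_N/\theta_N$ would force the strictly positive tail $\sum_{i>N}\theta_i^{-1}$ to vanish, a contradiction.

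Now I would apply Theorem~\ref{thm:separation-bound} to $\eta_N$ to get the matching lower bound
\begin{equation*}
  \abs{\eta_N\cdot\omega_f} \;>\; 2^{-c^{\,\Delta(\eta_N)^{9/2}}}
\end{equation*}
with $c>1$ depending only on $f$. The key quantitative step is to estimate the discriminant $\Delta(\eta_N)$. Since $\Delta$ is a quadratic form in the coefficients of $\eta_N$ in a fixed basis of $H_\Z$, and the coordinates of $\eta_N$ are bounded by $\max(\theta_N, p_N) = O(\theta_N)$, we obtain $\Delta(\eta_N) \leq C_1\,\theta_N^2$ for a constant $C_1 = C_1(\gamma_1,\gamma_2)$. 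Hence $\Delta(\eta_N)^{9/2} \leq C_2\,\theta_N^9$, and after enlarging $c$ to absorb $C_2$ the separation bound becomes $\abs{\eta_N\cdot\omega_f} > 2^{-c^{\theta_N^9}}$ for some new constant $c$.

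Comparing the two bounds gives the required inequality
\begin{equation*}
  \theta_{N+1} \;\leq\; 2\abs{b}\,\theta_N \cdot 2^{\,c^{\theta_N^9}}.
\end{equation*}
But the hypothesis $\theta_{N+1}\geq 2\uparrow 2\uparrow\theta_N\uparrow 10 = 2^{2^{\theta_N^{10}}}$ contradicts this for all sufficiently large $N$, since $\log_2(2^{c^{\theta_N^9}}) = (\log_2 c)\,2^{\theta_N^9\log_2(\log_2 c)/\log_2 2}$... more simply, $c^{\theta_N^9}$ is only singly exponential in $\theta_N^9$ whereas $2^{\theta_N^{10}}$ is already singly exponential in $\theta_N^{10}$, and $\theta_N^{10}$ beats $\theta_N^9\log_2 c$ asymptotically. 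This yields the contradiction and proves the theorem. The main technical point requiring care is the estimate $\Delta(\eta_N) = O(\theta_N^2)$ and the bookkeeping of constants so that the doubly exponential hypothesis on $\theta_{N+1}$ truly dominates $2^{c^{\theta_N^9}}$; everything else is essentially Liouville's trick.
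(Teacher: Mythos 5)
Your proposal is correct and follows essentially the same route as the paper's proof: form the lattice element $\theta_N\gamma_1 - p_N\gamma_2$, bound its pairing with $\omega_f$ above by the tail $O(\theta_N/\theta_{N+1})$ and below via Theorem~\ref{thm:separation-bound} using $\Delta = O(\theta_N^2)$, and derive a contradiction with the doubly exponential growth hypothesis. The only cosmetic difference is that the paper bounds the integer numerator by $u_k \leq 2\theta_k$ using the strict divisibility $\theta_i \geq 2^i$, whereas you use $p_N \leq L_\theta\theta_N + 1$; both work.
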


\begin{proof}
  Let~$l_k = \sum_{i=0}^k \theta_i^{-1}$. Since~$\theta_{i}$
  divides~$\theta_{i+1}$, we can write~$l_k = \frac{u_k}{\theta_k}$ for some
  integer~$u_k$. And since the divisibility is strict, $\theta_i \geq 2^i$
  and~${u_k} \leq 2 {\theta_k}$.
  Moreover
  \begin{equation}
    0 < L_\theta - l_k \leq 2 \theta_{k+1}^{-1},
  \end{equation}
  using~${\theta_{k+i+1}} \geq 2^{i} {\theta_{k+1}}$, for any~$i \geq 0$.
  Assume now that~$L_\theta = \frac{\gamma_1 \cdot \omega_f}{\gamma_2 \cdot \omega_f}$
  for some~$\gamma_1,\gamma_2 \in H_\mathbb{Z}$ and some~$f \in U_4$ with rational coefficients.
  Then, with
  \begin{equation}
    \gamma_k \eqdef \theta_k \gamma_1 - u_k \gamma_2,
  \end{equation}
  we check that~$\Delta(\gamma_k) = O(\theta_k^2)$ and that
  \begin{equation}
    0 < \abs{\theta_k} \abs{\gamma_2\cdot \omega_f} (L_\theta-l_k) = \abs{\gamma_k \cdot \omega_f}  \leq C \frac{\theta_{k}}{\theta_{k+1}},
  \end{equation}
  for some constant~$C$.
  By Theorem~\ref{thm:separation-bound},
  we obtain therefore
  \begin{equation}
     ( 2 \uparrow c \uparrow \theta_k \uparrow 9 )^{-1} \leq C \frac{\theta_k}{\theta_{k+1}},
  \end{equation}
  for some constant~$c > 0$ which depends only on~$f$.
  This contradicts the assumption on the growth of~$\theta$.
\end{proof}

\subsection{Computational complexity}
\label{sec:comp-compl}

Given a polynomial~$f \in \Qbar[w,x,y,z]\cap U_4$ and a cohomology
class~$\gamma \in H^2(X_f, \mathbb{Z})$, we can decide
if~$\gamma \in \Pic(X_f)$ (that is~$\gamma\cdot \omega_f = 0$) as follows:
\begin{enumerate}[(a)]
  \item\label{step:precomp} Compute the constant~$c$ in Theorem~\ref{thm:separation-bound};
  \item\label{step:numcomp} Let  $\epsilon = \left( 2\uparrow c\uparrow \Delta(\gamma) \uparrow \frac92 \right)^{-1}$
  and compute an approximation~$s \in \mathbb{C}$ of the period $\gamma \cdot \omega_f$ such that~$\abs{s - \gamma\cdot \omega_f} < \frac12\epsilon$.
\end{enumerate}
Then $\gamma$ is in $\Pic(\Xf)$ if and only if~$\abs{s} < \frac12 \epsilon$.

Computing the Picard group itself is an interesting application of this procedure.
Algorithms for computing the Picard group of~$\Xf$, or even just the rank of it,
break the problem into two: a part gives larger and larger lattices inside $\Pic(\Xf)$ while the other part
gets finer and finer upper bounds on the rank of~$\Pic(\Xf)$ \parencite{Charles_2014,HassettKreschTschinkel_2013,PoonenTestaLuijk_2015}. The computation stops when
the two parts meet.
Approximations from the inside are based on finding sufficiently many elements of~$\Pic(\Xf)$.
So while deciding the membership of~$\gamma$ in~$\Pic(X_f)$ can be solved by computing~$\Pic(X_f)$ first,
it makes sense not to assume prior knowledge of the Picard group and to study the complexity of deciding membership as~$\Delta(\gamma) \to \infty$,
with~$f$ fixed.

Step~\ref{step:precomp} does not depend on~$\gamma$, so only the complexity of Step~\ref{step:numcomp} matters, that is
the numerical approximation of~$\gamma\cdot \omega_f$.
This approximation amounts to numerically solving a Picard--Fuchs differential equation \parencite{Sertoz_2019}
and the complexity is~$(\log \frac{1}{\epsilon})^{1+o(1)}$ \parencite{HAKMEM,Hoeven_2001,Mezzarobba_2010,Mezzarobba_2016}.
With the value of~$\epsilon$ in Step~\ref{step:numcomp}, we have a complexity bound of $\exp(\Delta(\gamma)^{O(1)})$ for deciding membership.

For the sake of comparison, we may speculate about an approach that would
decide the membership of~$\gamma$ in~$\Pic(X_f)$ by trying to construct an
explicit algebraic divisor on~$X_f$ whose cohomology class is equal to~$\gamma$.
It would certainly need to decide the existence of a point satisfying some
algebraic conditions in some Hilbert scheme~$\mathcal{H}_{d,g}$,
with~$d = O(\Delta^{\frac12})$ and~$g = O(\Delta)$ (see \S \ref{sec:defNL}).
Embedding~$\mathcal{H}_{d,g}$ (or some fibration over it, as we did in~\S
\ref{sec:equat-relat-hilb}) in some affine chart of a projective space of
dimension~$d^{O(1)}$ will lead to a complexity 
of~$\exp( \Delta(\gamma)^{O(1)} )$ for deciding membership in this way.

However, if Conjecture~\ref{conj:better-height-bounds} holds true, then
the complexity of the numerical approach for deciding membership would reduce to~$\Delta(\gamma)^{O(1)}$.

\section{Concluding remarks}\label{sec:conclusion}

\subsection{Going beyond quartic surfaces}

There are two directions in which the main result, Theorem~\ref{thm:separation-bound}, of this article can, in principle, be generalized beyond quartic surfaces.

In the first direction, our effective methods naturally extend to complete intersections in complete simplicial toric varieties, provided the complete intersection has a K3 type middle cohomology satisfying the integral Hodge conjecture. By this last condition, we mean that a single period should govern if a homology cycle is algebraic.
For instance, cubic fourfolds satisfy all of these conditions~(\cite{Voisin2012}). Of course, polarized K3 surfaces of degrees $2$, $6$, $8$ also work, in addition to the degree $4$ case covered here.

To generalize the result to this context, one needs to compute two ingredients. The height and degree bounds for the image of a Hilbert schemes, and the ``spread'' of the period map (as in \S\ref{sec:inverse_function}). Our use of effective Nullstellensatz to compute heights clearly extends. To compute the spread, we used the Griffiths--Dwork reduction which continues to work for complete intersections in compact simplicial toric varieties~(\cite{Batyrev1994, Dimca1995, Mavlyutov1999}).

The second direction one could generalize the result is to stick with surfaces in~$\ppp$ but to increase the degree. In this case, we do not know how to control the vanishing of individual period integrals. However, Lefschetz $(1,1)$-theorem can be used to relate algebraic cycles to the simultaneous vanishing of a vector of periods coming from all holomorphic forms. For instance, on quintic surfaces one can separate $4$-dimensional (holomorphic) period vectors from one another. The deduction of the separation bounds would be possible from a parallel discussion to the one provided here. 
This application would make it possible to prove our heuristic Picard group computations of surfaces \parencite{LairezSertoz_2019}.

It would also be highly desirable to be able to numerically verify arbitrary, non-linear, relations between periods of quartics. However, in order to generalize our approach to this set-up, one would need the integral Hodge conjecture on products of quartic surfaces.

\subsection{Closed formulae for the bounds}

It is possible to determine a closed formula, involving the height of~$f$, that bounds the constant $c$ in~Theorem~\ref{thm:separation-bound}. We removed the deduction of such a formula due to the excessive technical complexity it presents. In addition, the pursuit of a human readable bound gets us further and further from the optimal bounds. We envisioned using the constant~$c$ on computer calculations where an algorithmic deduction of $c$ is possible and preferable. We designed our proofs so that such an algorithm is explicit in the proofs. An implementation of this algorithm would be beneficial after the bounds for the heights of the Noether--Lefschetz loci are brought down significantly.

\subsection{Optimal bounds}

We conjectured by analogy (Conjecture~\ref{conj:better-height-bounds}) that our bounds for the height of the Noether--Lefschetz locus can be lowered by one level of exponentiation. One can be more optimistic based on the following observation. For many example quartics $X_f$, we determined the equations for the Hilbert scheme of lines over each pencil $X_{f+t m}$ for monomials $m$. Then, going through the algorithm in the proofs, we computed sharper separation bounds on these example quartics. On these examples, the separation bound was around $10^{ -60 }$. In other words, it was sufficient to deduce wether a homology cycle was the class of a line using only $60$ digits of precision. This suggests that for homology cycles of small discriminant, optimal separation bounds may be small enough to be used in practice. It would be interesting to see if generalizing the work of Maulik and Pandharipande from degrees to heights by using the modularity of arithmetic Chow rings~(\cite{Kudla2003}) would give close to optimal bounds.

\subsection{Analogies with related work}\label{sec:analogy}

Our construction bears a remote resemblance to the analytic subgroup theorem of \textcite{Wustholz1989} and the period theorem of \textcite{Masser1993}. The analytic subgroup theorem and its applications work with the exponential map $\exp_A \colon T_0 A \to A$ of a (principally polarized) abelian variety $A$ over $\Qbar \subset \C$. The periods of $A$ form a lattice $\Lambda \eqdef \ker \exp A$. Let $P = \exp_A^{-1} A(\Qbar)$ be the periods of all algebraic points on $A$.

The analytic subgroup theorem implies that $\Qbar$-linear relations between any set of elements $S \subset P$ are determined by abelian subvarieties of $B$: there is an abelian subvariety such that $T_0 B$ coincides with the span of $S$. Observe that the linear relations live on the domain of the transcendental map $\exp_A$ and are converted to an algebraic subvariety on the codomain. When $S = \{\gamma\} \subset \Lambda$, the Masser--Wüstholz period theorem bounds the degree of smallest $B$ whose tangent space contains $\gamma$ using the height of $A$ and the norm of $\gamma$.

In our work, we consider the space $U_4$ of smooth homogeneous quartic polynomials of degree $4$ and its universal cover $\widetilde{U}_4 \to U_4$. We then take the (transcendental) period map $\cp \colon \widetilde U_4 \to H_\C$. Note that the $\Z$-relations between periods are realized as linear subspaces of the period domain whereas the preimage of these linear spaces are the Noether--Lefschetz loci. These Noether--Lefschetz loci map to algebraic hypersurfaces on the space $U_4$.  

Superficially, the main difference of the two approaches is the direction of the naturally appearing transcendental maps that linearize relations between periods. However, the nature of the two transcendental maps appearing in both constructions also differ substantially. 

\printbibliography

\end{document}